\newtheorem{cor}{Corollary}[section]
\newtheorem{teo}[cor]{Theorem}
\newtheorem{prop}[cor]{Proposition}
\newtheorem{lemma}[cor]{Lemma}
\theoremstyle{definition}
\newtheorem{defi}[cor]{Definition}
\theoremstyle{remark}
\newtheorem*{remark*}{Remark}
\newcommand{\Pp}{\mathbb{P}}
\newcommand{\R}{\mathbb{R}}
\newcommand{\C}{\mathbb{C}}
\newcommand{\Z}{\mathbb{Z}}
\newcommand{\h}{\mathbb{H}}
\newcommand{\N}{\mathbb{N}}
\newcommand{\diag}{\mathrm{diag}}
\newcommand{\SL}{\mathrm{SL}}
\newcommand{\dPSL}{\mathbb{P}SL(2,\mathbb{R})\times \mathbb{P}SL(2,\mathbb{R})}
\newcommand{\PSL}{\mathbb{P}SL}
\newcommand{\T}{\mathpzc{T}}
\newcommand{\Isom}{\mathrm{Isom}}
\newcommand{\SO}{\mathrm{SO}}
\newcommand{\AdS}{\mathrm{AdS}}
\newcommand{\Imm}{\mathcal{I}\text{m}}
\newcommand{\Teich}{\mathcal{T}}
\renewcommand{\Re}{\mathcal{R}\text{e}}
\DeclareMathAlphabet{\mathpzc}{OT1}{pzc}{m}{it}
\title[Wild GHM AdS structures]{Wild globally hyperbolic maximal \\ anti-de Sitter structures}
\author{Andrea Tamburelli}
\begin{document}

\begin{abstract}
Let $\Sigma$ be a connected, oriented surface with punctures and negative Euler characteristic. We introduce wild globally hyperbolic anti-de Sitter structures on $\Sigma \times \R$ and provide two parameterisations of their deformation space: as a quotient of the product of two copies of the Teichm\"uller space of crowned hyperbolic surfaces and as the bundle over the Teichm\"uller space of $\Sigma$ of meromorphic quadratic differentials with poles of order at least $3$ at the punctures. 
\end{abstract}

\maketitle

\setcounter{tocdepth}{1}

\tableofcontents

\section*{Introduction}
Globally hyperbolic maximal (GHM) anti-de Sitter three-manifolds are a special class of Lorentzian manifolds that share many similarities with hyperbolic quasi-Fuchsian manifolds. Mess initiated the study of the deformation space $\mathcal{GH}(S)$ of such structures (\cite{Mess}) showing that if $S$ is a closed, oriented surface of genus at least $2$, then $\mathcal{GH}(S)$ is parameterised by two copies of the Teichm\"uller space of $S$. After that, many progress has been made in the understanding of the geometry of these manifolds (\cite{folKsurfaces}, \cite{foliationCMC}, \cite{volumeAdS}, \cite{entropy}): in particular, Krasnov and Schlenker (\cite{Schlenker-Krasnov}) noticed that they behave more like almost-Fuchsian hyperbolic manifolds in the sense that they always contain a unique embedded maximal surface (i.e. with vanishing mean curvature) with principal curvatures in $(-1,1)$. They exploited this fact in order to construct a new parameterisation of $\mathcal{GH}(S)$ by the cotangent bundle of the Teichm\"uller space of $S$ by associating to a GHM anti-de Sitter manifold $M$ the conformal class of the induced metric and the holomorphic quadratic differential that determines the second fundamental form of the maximal surface embedded in $M$. \\

This construction has been later generalised by the author to include non-compact surfaces (\cite{Tambu_regularAdS}, \cite{Tambu_poly}). In particular, if $\Sigma$ is a connected, oriented surface with punctures and negative Euler characteristic, we introduced a special class of globally hyperbolic maximal anti-de Sitter structures on $\Sigma\times \R$ that we called \emph{regular} and are parameterised by the bundle over Teichm\"uller space of $\Sigma$ of meromorphic quadratic differentials with poles of order at most $2$ at the punctures. These manifolds play a role also in the theory of GHM anti-de Sitter structures with closed Cauchy-surfaces, as they can be seen as the geometric limits of such structures along pinching sequences in the cotangent bundle parameterisation (\cite{Tambu_pinching}). \\

In this paper we extend our previous results in order to include higher order poles. We expect this theory to be relevant for the study of degeneration of GHM anti-de Sitter structures along more general diverging sequences (\cite{Gupta_limits}). \\

We first show existence and uniqueness of the maximal surface with given embedding data:\\

{\bf Theorem A.} {\it \ Given a complete hyperbolic metric $h$ of finite area on $\Sigma$ and a meromorphic quadratic differential $q$ with poles of order at least $3$ at the punctures, there exists a unique (up to global isometries) complete, conformal equivariant maximal embedding $\tilde{\sigma}:\tilde{\Sigma}\rightarrow \AdS_{3}$ into anti-de Sitter space whose second fundamental form is the real part of $q$.} \\

The embedding $\tilde{\sigma}$ comes together with a representation $\rho:\pi_{1}(\Sigma)\rightarrow \Isom(\AdS_{3})$ that, by identifying $\Isom(\AdS_{3})$ with $\dPSL$, is equivalent to a pair of representations $\rho_{l,r}:\pi_{1}(\Sigma)\rightarrow \PSL(2,\R)$. By the recent work of Gupta (\cite{Gupta}), we will deduce that $\rho_{l,r}$ are faithful and discrete and send peripheral curves to hyperbolic elements. The main part of the paper is devoted to the study of the boundary at infinity of the maximal surface, that, unlike the closed case, is only partially determined by the representation $\rho$. Recall that the boundary at infinity of anti-de Sitter space can be identified with $\R\Pp^{1}\times \R\Pp^{1}$ and the action of $\rho=(\rho_{l},\rho_{r})$ extends naturally on each factor.\\

{\bf Theorem B.}{\it \ The boundary at infinity of $\tilde{\sigma}(\tilde{\Sigma})$ is a locally achronal curve that contains the closure of the set of pairs of attracting fixed points of $(\rho_{l}, \rho_{r})$. This set is completed to a topological circle by inserting, in a $\rho$-equivariant way, a light-like polygonal curve at each end.}\\

We will define precisely in Section \ref{sec:boundary} what we mean by light-like polygonal curve. Here it suffices to mention that it consists of an infinite family of light-like segments on the boundary at infinity of $\AdS_{3}$ belonging to the right-foliation and the left-foliation in an alternate way, which is equivariant by the action of the cyclic group generated by the hyperbolic translation along the corresponding peripheral curve.\\

The boundary at infinity of $\tilde{\sigma}(\tilde{\Sigma})$ determines then a domain of dependence on which $\rho(\pi_{1}(\Sigma))$ acts properly discontinuously and the quotient gives the desired \emph{wild} globally hyperbolic anti-de Sitter manifold diffeomorphic to $\Sigma\times \R$. \\

Moreover, using the relation between maximal surfaces and minimal Lagrangian maps, we are able to give an analogue of Mess' parameterisation for wild anti-de Sitter structures. Recall that an orientation preserving diffeomorphism $m:(\Sigma, h)\rightarrow (\Sigma, h')$ between hyperbolic surfaces is minimal Lagrangian if there exists a Riemann surface $X$ and harmonic maps $f:X \rightarrow (\Sigma,h)$ and $f':X \rightarrow (\Sigma,h')$ with opposite Hopf differentials such that $m=f'\circ f^{-1}$. These are in one-to-one correspondence with $(\rho_{l}, \rho_{r})$-equivariant maximal surfaces in anti-de Sitter space via the Gauss map (\cite{bon_schl}): the Riemann surface $X$ is determined by the conformal structure of the maximal surface, $h$ and $h'$ are hyperbolic metrics on $\Sigma$ with holonomy $\rho_{l}$ and $\rho_{r}$ respectively, and the harmonic maps $f$ and $f'$ are the projections of the equivariant Gauss map (that in this Lorentzian context takes value into $\h^{2}\times \h^{2}$). As a consequence of the work of Gupta (\cite{Gupta}), we deduce that in our case the image of the Gauss map is a pair of crowned hyperbolic surfaces and we prove the following:\\
\\
{\bf Theorem C.}{\it \ The deformation space of wild globally hyperbolic maximal anti-de Sitter structures on $\Sigma\times \R$ is parameterised by the quotient of two copies of the Teichm\"uller space of crowned hyperbolic surfaces by the infinite cyclic group generated by the diagonal action of Dehn twists along the boundary curves and relabelling of the boundary cusps.}\\

\subsection*{Outline of the paper} In Section \ref{sec:background} we recall well-known facts about anti-de Sitter geometry, meromorphic quadratic differentials and crowned hyperbolic surfaces. In Section  \ref{sec:maxsurface} we prove existence and uniqueness of the equivariant maximal embedding starting from the data of a complete hyperbolic metric of finite area on $\Sigma$ and a meromorphic quadratic differential with poles of order at least $3$. The boundary at infinity of this surface is described in Section \ref{sec:boundary}. We prove Theorem C in Section \ref{sec:para}. 

\subsection*{Acknowledgement} The author would like to thank Subhojoy Gupta for answering specific questions about crowned hyperbolic surfaces.

\section{Background material}\label{sec:background} We recall here some well-known facts about anti-de Sitter geometry, (meromorphic) quadratic differentials on Riemann surfaces, and crowned hyperbolic surfaces that will be used in the sequel. Throughout the paper, we will denote with $\overline{\Sigma}$ a closed, connected, oriented surface and with $\Sigma=\overline{\Sigma}\setminus \{p_{1}, \dots, p_{N}\}$ a surface with a finite number of punctures. We will always assume that $\chi(\Sigma)<0$. Moreover, we will denote with $\mathcal{T}(\Sigma)$ the Teichm\"uller space of $\Sigma$, i.e. the space of marked complete hyperbolic structures of finite area on $\Sigma$ up to isotopy. 

\subsection{Anti-de Sitter geometry} Consider the vector space $\R^{4}$ endowed with a bilinear form of signature $(2,2)$
\[
	\langle x,y\rangle= x_{0}y_{0}+x_{1}y_{1}-x_{2}y_{2}-x_{3}y_{3} \ .
\]
We denote 
\[
	\widehat{\AdS}_{3}=\{ x \in \R^{4} \ | \ \langle x , x \rangle=-1 \} \ .
\]
It can be easily verified that $\widehat{\AdS}_{3}$ is diffeomorphic to a solid torus and the restriction of the bilinear form to the tangent space at each point endows $\widehat{\AdS}_{3}$ with a Lorentzian metric of constant sectional curvature $-1$. Anti-de Sitter space is then defined as
\[
	\AdS_{3}=\Pp(\{x \in \R^{4} \ | \ \langle x,x\rangle < 0\})\subset \R\Pp^{3} \ .
\]
The natural map $\pi:\widehat{\AdS}_{3} \rightarrow \AdS_{3}$ is a two-sheeted covering and we endow $\AdS_{3}$ with the induced Lorentzian structure. The isometry group of $\widehat{\AdS_{3}}$ that preserves the orientation and the time-orientation is $\SO_{0}(2,2)$, the connected component of the identity of the group of linear transformations that preserve the bilinear form of signature $(2,2)$. \\

The boundary at infinity of anti-de Sitter space is naturally identified with 
\[
	\partial_{\infty}\AdS_{3}=\Pp(\{ x \in \R^{4} \ | \ \langle x,x\rangle=0\}) \ .
\]
It coincides with the image of the Segre embedding $s:\R\Pp^{1}\times \R\Pp^{1} \rightarrow \R\Pp^{3}$, and thus, it is foliated by two families of projective lines, which we distinguish by calling $s(\R\Pp^{1} \times \{*\})$ the right-foliation and $s(\{*\}\times \R\Pp^{1})$ the left-foliation. The action of an isometry extends continuously to the boundary, and preserves the two foliations. Moreover, it acts on each line by a projective transformation, thus giving an identification between $\Pp\SO_{0}(2,2)$ and $\dPSL$. \\

The Lorentzian metric on $\AdS_{3}$ induces on $\partial_{\infty}\AdS_{3}$ a conformally flat Lorentzian structure. To see this, notice that the map 
\begin{align*}
	F:D \times S^{1} &\rightarrow \widehat{\AdS}_{3} \\
		(z,w) &\mapsto \left( \frac{2}{1-\|z\|^{2}}z, \frac{1+\|z\|^{2}}{1-\|z\|^{2}}w\right)
\end{align*}
is a diffeomorphism, hence $D\times S^{1}$ is a model for anti-de Sitter space if endowed with the pull-back metric
\[
	F^{*}g_{\AdS_{3}}=\frac{4}{(1-\|z\|^{2})^{2}}|dz|^{2}-\left(\frac{1+\|z\|^{2}}{1-\|z\|^{2}}\right)d\theta'^{2} \ .
\]
Therefore, by composing with the projection $\pi:\widehat{\AdS}_{3}\rightarrow \AdS_{3}$, we deduce that $\pi \circ F$ continuously extends to a homeomorphism
\begin{align*}
	\partial_{\infty}F:S^{1}\times S^{1} &\rightarrow \partial_{\infty}\AdS_{3} \\
				(z,w) &\mapsto (z,w)
\end{align*}
and in these coordinates the conformally flat Lorentzian structure is induced by the conformal class $c=[d\theta^{2}-d\theta'^{2}]$. Notice, in particular, that the light-cone at each point $p \in \partial_{\infty}\AdS_{3}$ is generated by the two lines in the left- and right- foliation passing through $p$. 

\subsection{Complete maximal surfaces in $\AdS_{3}$} \label{sec:maxAdS} Let $U\subset \C$ be a simply connected domain. We say that $\sigma:U \rightarrow \AdS_{3}$ is a space-like embedding if $\sigma$ is an embedding and the induced metric $I=\sigma^{*}g_{AdS}$ is Riemannian. The Fundamental Theorem of surfaces embedded in anti-de Sitter space ensures that such a space-like embedding is uniquely determined, up to post-composition by a global isometry of $\AdS_{3}$, by its induced metric $I$ and its shape operator $B:\sigma_{*}TU \rightarrow \sigma_{*}TU$, which satisfy
\[
	\begin{cases} 
		d^{\nabla}B=0 \ \ \ \ \ \ \ \ \ \ \  \ \ \ \ \ \ \ \ \ \ \ \ \ \ \ \text{(Codazzi equation)} \\
		K_{I}=-1-\det(B) \ \ \ \ \ \ \ \ \ \ \ \ \ \text{(Gauss equation)}
	\end{cases}
\]
where $\nabla$ is the Levi-Civita connection and $K_{I}$ is the curvature of the induced metric on $\sigma(U)$. \\

We say that $\sigma$ is a maximal embedding if $B$ is traceless. In this case, the Codazzi equation implies that the second fundamental form $II=I(B\cdot, \cdot)$ is the real part of a quadratic differential $q$, which is holomorphic for the complex structure compatible with the induced metric $I$, in the following sense. For every pair of vector fields $X$ and $Y$ on $\sigma(U)$, we have
\[
	\Re(q)(X,Y)=I(BX,Y) \ .
\]
In a local conformal coordinate $z$, we can write $q=f(z)dz^{2}$ with $f$ holomorphic and $I=e^{2u}|dz|^{2}$. Thus, $\Re(q)$ is the bilinear form that in the frame $\{\partial_{x}, \partial_{y}\}$ is represented by 
\[
	\Re(q)=\begin{pmatrix}
			\Re(f) & -\Imm(f) \\
			-\Imm(f) & -\Re(f)
		\end{pmatrix} \ ,
\]
and the shape operator can be recovered as $B=I^{-1}\Re(q)$.\\

If the induced metric is complete, the space-like condition implies that, identifying $\widehat{AdS}_{3}$ with $D\times S^{1}$ via $F$, the surface is the graph of a $2$-Lipschitz map (\cite[Proposition 3.1]{Tambu_poly}) and its boundary at infinity $\Gamma$ is a locally achronal topological circle in $\partial_{\infty}\AdS_{3}$ (\cite[Corollary 3.3]{Tambu_poly}) such that if two points are causally related, then a light-like segment joining them is entirely contained in $\Gamma$ (\cite[Lemma 1.1]{Tambu_regularAdS}).

\subsection{GHMC anti-de Sitter manifolds}\label{subsec:GHMC} This paper deals with the moduli space of a special class of manifolds locally isometric to $\AdS_{3}$. \\

We say that an anti-de Sitter three-manifold $M$ is Globally Hyperbolic Maximal (GHM) if it contains an embedded, oriented, space-like surface $S$ that intersects every inextensible causal curve in exactly one point, and if $M$ is maximal by isometric embeddings. It turns out that $M$ is necessarily diffeomorphic to a product $S\times \R$ (\cite{MR0270697}). Moreover, we say that $M$ is Cauchy Compact (C) if $S$ is closed. We denote by $\mathcal{GH}(S)$ the deformation space of GHMC anti-de Sitter structures on $S\times \R$. \\

The theory is well-developed when $S$ is closed of genus at least $2$:
\begin{teo}[\cite{Mess}] $\mathcal{GH}(S)$ is parameterised by $\Teich(S)\times \Teich(S)$.
\end{teo}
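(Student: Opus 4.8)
The plan is to build explicit maps in both directions between $\mathcal{GH}(S)$ and $\Teich(S)\times\Teich(S)$ and to show they are mutually inverse. For the forward direction, I would start with a GHMC structure on $S\times\R$ and pass to the universal cover: the developing map realises the universal cover as a domain $\widetilde{\Omega}\subset\AdS_3$ and produces a holonomy representation $\rho\colon\pi_1(S)\to\Isom(\AdS_3)$, well defined up to conjugation. Using the identification $\Isom(\AdS_3)\cong\dPSL$ recalled in the background section, I would write $\rho=(\rho_l,\rho_r)$ and take $M\mapsto(\rho_l,\rho_r)$ as the candidate parameterisation, so that the real content is to pin down its image and to invert it.

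Next I would analyse the asymptotic boundary $\Lambda=\partial\widetilde{\Omega}\cap\partial_\infty\AdS_3$. Global hyperbolicity forces $\Lambda$ to be an achronal topological circle, and since $\partial_\infty\AdS_3=\R\Pp^1\times\R\Pp^1$ carries the left- and right-foliations, achronality should translate into the statement that $\Lambda$ is the graph of an orientation-preserving homeomorphism $\phi\colon\R\Pp^1\to\R\Pp^1$. Equivariance of the developing map then yields the intertwining relation $\phi\circ\rho_l(\gamma)=\rho_r(\gamma)\circ\phi$ for all $\gamma\in\pi_1(S)$, so that $\phi$ realises a topological conjugacy between the two boundary actions. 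Compactness of the Cauchy surface $S$ should force each of $\rho_l,\rho_r$ to be discrete, faithful, and cocompact, i.e. a point of $\Teich(S)$, giving a well-defined map $\mathcal{GH}(S)\to\Teich(S)\times\Teich(S)$.

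For the inverse construction I would begin with $(\rho_l,\rho_r)\in\Teich(S)\times\Teich(S)$: the two cocompact Fuchsian actions on $\R\Pp^1$ are topologically conjugate by a unique orientation-preserving homeomorphism $\phi$, obtained as the boundary extension of the $\pi_1(S)$-equivariant map between the two copies of $\h^2$. Its graph $\Lambda\subset\partial_\infty\AdS_3$ is an achronal circle invariant under $\rho=(\rho_l,\rho_r)$, and I would let $\Omega$ be its invisible domain (domain of dependence). One then checks that $\rho$ acts properly discontinuously on $\Omega$ and that the quotient is a GHMC anti-de Sitter manifold with compact Cauchy surface homeomorphic to $S$. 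Finally I would verify that the two assignments are mutually inverse, respect markings, and are continuous, hence homeomorphisms, completing the identification.

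The main obstacle concentrates in the two ``achronality/cocompactness'' steps. In the forward direction the delicate point is to show that $\Lambda$ is genuinely the graph of a homeomorphism and that each projected representation is cocompact Fuchsian rather than merely discrete and faithful; in the backward direction it is to establish proper discontinuity of the action on the invisible domain together with global hyperbolicity and maximality of the quotient. Both rest on a careful causal analysis of $\partial_\infty\AdS_3$ and on the interaction between achronality of $\Lambda$ and the convergence dynamics of the two Fuchsian actions on $\R\Pp^1$.
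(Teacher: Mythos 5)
Your proposal follows essentially the same route as the paper's sketch of Mess's construction: the forward map is the projection of the holonomy to $(\rho_l,\rho_r)$ with Mess's discreteness and faithfulness, and the inverse takes the graph of the unique conjugating homeomorphism $\phi\colon\R\Pp^1\to\R\Pp^1$ as an achronal circle $\Lambda_\rho$ and quotients its domain of dependence. The delicate points you flag (achronality, cocompactness, proper discontinuity on the invisible domain) are exactly the content the paper defers to \cite{Mess}.
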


The homeomorphism is constructed as follows. Given a GHMC anti-de Sitter structure, its holonomy representation $\rho:\pi_{1}(S) \rightarrow \Isom(\AdS_{3})\cong \dPSL$ induces a pair of representations $(\rho_{l}, \rho_{r})$ by projecting onto each factor. Mess proved that both are faithful and discrete and thus define two points in $\Teich(S)$. On the other hand, given a pair of Fuchsian representations $(\rho_{l},\rho_{r})$, there exists a unique homeomorphism $\phi: \R\Pp^{1}\rightarrow \R\Pp^{1}$ such that $\rho_{r}(\gamma)\circ \phi=\phi\circ \rho_{l}(\gamma)$ for every $\gamma \in \pi_{1}(S)$. The graph of $\phi$ defines a curve $\Lambda_{\rho}$ on the boundary at infinity of $\AdS_{3}$ and Mess constructed a maximal domain of discontinuity $\mathcal{D}(\Lambda_{\rho})$ for the action of $\rho(\pi_{1}(S))$, called domain of dependence, by considering the set of points whose (projective) dual space-like plane is disjoint from $\Lambda_{\rho}$. The quotient
\[
	M=\mathcal{D}(\Lambda_{\rho})/\rho(\pi_{1}(S))
\]
is the desired GHMC anti-de Sitter manifold. \\


Later Krasnov and Schlenker (\cite{Schlenker-Krasnov}) introduced another parameterisation of $\mathcal{GH}(S)$ by the cotangent bundle over $\Teich(S)$, which is what inspired our construction. Let us recall it briefly here. Let $M$ be a GHMC anti-de Sitter manifold. It is well-known that $M$ contains a unique embedded maximal surface $S$ (\cite{foliationCMC}). Lifting $S$ to $\AdS_{3}$, we obtain an equivariant maximal embedding of $\h^{2}$ into $\AdS_{3}$, which is completely determined (up to global isometries of $\AdS_{3}$) by its induced metric and a holomorphic quadratic differential. By equivariance, these define a Riemannian metric $I$ and a holomorphic quadratic differential $q$ on $S$. We can thus define a map
\begin{align*}
	\Psi: \mathcal{GH}(S) &\rightarrow T^{*}\Teich(S) \\
			M &\mapsto (h,q)
\end{align*}
associating to a GHMC anti-de Sitter structure the unique hyperbolic metric in the conformal class of $I$ and the holomorphic quadratic differential $q$.\\

In order to prove that $\Psi$ is a homeomorphism, Krasnov and Schlenker (\cite{Schlenker-Krasnov}) found an explicit inverse. They showed that, given a hyperbolic metric $h$ and a quadratic differential $q$ that is holomorphic for the complex structure compatible with $h$, it is always possible to find a smooth map $v:S\rightarrow \R$ such that $I=2e^{2v}h$ and $B=I^{-1}\Re(2q)$ are the induced metric and the shape operator of a maximal surface embedded in a GHMC anti-de Sitter manifold. This is accomplished by noticing that the Codazzi equation for $B$ is trivially satisfied since $q$ is holomorphic, and thus it is sufficient to find $v$ so that the Gauss equation holds. Now,
\[
	\det(B)=\det(e^{-2v}(2h)^{-1}\Re(q))=e^{-4v}\det((2h)^{-1}\Re(2q))=-e^{-4v}\|q\|_{h}^{2}
\]
and
\[
	K_{I}=e^{-2v}(K_{2h}-\Delta_{2h}v)=\frac{1}{2}e^{-2v}(K_{h}-\Delta_{h}v)
\]
hence the Gauss equation translates into the quasi-linear PDE
\begin{equation}\label{eq:PDE}
	\frac{1}{2}\Delta_{h}v=e^{2v}-e^{-2v}\|q\|_{h}^{2}+\frac{1}{2}K_{h} \ .
\end{equation}
They proved existence and uniqueness of the solution to Equation (\ref{eq:PDE}) on closed surfaces and on surfaces with punctures, when $q$ has simple pole sigularities at the punctures. In \cite{Tambu_regularAdS} an analogous result was obtained for meromorphic quadratic differentials with poles of order at most $2$ at the punctures. In Section \ref{sec:maxsurface}, we will extend this result to include higher order poles and describe the geometry of the associated maximal surface.

\subsection{Meromorphic quadratic differentials}\label{subsec:QD}
Suppose that $\Sigma$ is endowed with a complex structure. A meromorphic quadratic differential $q$ on $\Sigma$ is a $(2,0)$-tensor, locally of the form $q(z)dz^{2}$, where $q(z)$ is a meromorphic function with poles at the punctures $\{p_{1}, \dots, p_{N}\}$. In this paper, we are interested in meromorphic quadratic differentials with poles of order $n\geq 3$ at the punctures. In this case, we can always find a local coordinate chart around the puncture such that
\[
	q(z)dz^{2}=\left( \frac{a_{n}}{z^{n}}+\frac{a_{n-1}}{z^{n-1}}+\cdots+\frac{a_{2}}{z^{2}}\right) dz^{2}
\]
for some coefficients $a_{j} \in \C$. Meromorphic quadratic differentials with poles at points $p_{1}, p_{2}, \dots, p_{N} \in \overline{\Sigma}$ of orders bounded above by $n_{1}, n_{2}, \dots, n_{N} \in \N$ form a vector space over $\C$ of real dimension $d=3|\chi(\overline{\Sigma})|+2\sum_{i}n_{i}$, by the Riemann-Roch Theorem. In particular, the space of meromorphic quadratic differentials with poles of order exactly $n_{1}, \dots, n_{N}$ is parameterised by $\R^{d-N}\times (S^{1})^{N}$.\\

A meromorphic quadratic differential $q$ induces a singular flat metrics $|q|$ on $\Sigma$ that in local coordinates is written as $|q|=|q(z)||dz|^{2}$. The metric has cone singularities of angle $\pi(m+2)$ at a zero of order $m$ of $q$. When poles have order at least $2$, the metric is complete of infinite area, and poles are at infinite distance from any point on the surface. Moreover, Strebel (\cite{Strebel}) described the local picture of the singular flat metric around a pole $p$ of order $n\geq 3$ as a cyclic arrangement of $(n-2)$ half-planes glued along half-lines in their boundaries. These half-planes are constructed as follows (see also \cite{DW}). First, we choose a local coordinate $w$ adapted to the quadratic differential in the sense that
\[
	w^{*}q=\begin{cases} \frac{1}{w^{n}}dw^{2} \ \ \ \ \ \ \ \ \ \ \ \ \ \ \ \ \ \  \ \ \ \text{if $n$ is odd} \\
	\left(\frac{1}{w^{n/2}}+\frac{A}{w}\right)^{2}dw^{2}  \ \ \ \ \ \ \ \ \text{if $n$ is even} \ .
	\end{cases}
\]
Then, for every $k=1, \dots, n-2$, the half-planes are the images of the natural charts
\[
	\Phi_{k}:\h^{2}\rightarrow V=\{ 0< |w| < r \} \subset \Sigma 
\]
defined by the property that $\Phi_{k}^{*}q=d\omega^{2}$. If $p$ is a pole of odd order, these natural coordinates can be written explicitly as
\begin{equation}\label{eq:natural}
	\Phi_{k}(\omega)=\left( \frac{n-2}{2} \right)^{-\frac{2}{n-2}}\exp \left( -\frac{2}{n-2}\log(\omega+iB)+\frac{2k\pi i}{n-2}\right)
\end{equation}
where $B>0$ is big enough to ensure that $\Phi_{k}(\h^{2})\subset V$. For even order poles the above construction needs to be slightly modified: for $\epsilon>0$ small, we consider
\[
	\h^{2}_{\epsilon}=\{ \omega \in \C \ | \ -\epsilon<\arg(\omega)<\pi + \epsilon\ \}  \ .
\]
Notice that there is a constant $\lambda(\epsilon)>1$ depending on $\epsilon$ such that any pair of points $\omega_{1}, \omega_{2} \in \h^{2}_{\epsilon}$ is connected by a path in $\h^{2}_{\epsilon}$ with length bounded above by $\lambda(\epsilon)|\omega_{1}-\omega_{2}|$. The natural coordinates are then defined as a composition 
\[
	\Phi_{k}=\Psi_{k} \circ \phi: \h^{2} \rightarrow \h^{2}_{\epsilon} \rightarrow V 
\]
where $\phi$ and $\Psi_{k}$ are defined as follows. The map $\Psi_{k}$ is given by (\ref{eq:natural}) extended to $\h^{2}_{\epsilon}$ for a suitable choice of $B$ that guarantees that $\Psi_{k}(\h^{2}_{\epsilon})\subset V$. An easy computation shows that
\[
	\Psi_{k}^{*}q=\left( 1+\frac{C}{\omega+iB}\right)^{2}d\omega^{2}
\]
for a constant $C\in \C$ depending only on $A$. Up to increase $B$ further we can assume that 
\[
	\left| \frac{C}{\omega+iB} \right|<\frac{1}{\lambda(\epsilon)} \ \ \  \text{for every $\omega \in \h^{2}_{\epsilon}$} \ .
\]
With this choice, the map $F(\omega)=\omega+C\log(\omega+iB)$ sends $\h^{2}_{\epsilon}$ injectively into a domain in the complex plane containing $\h^{2}+iD$ fo some constant $D>0$ large enough, thus the function $\phi(\omega)=F^{-1}(\omega+iD)$ is well-defined as map $\phi:\h^{2} \rightarrow \h^{2}_{\epsilon}$ and the composition $\Phi_{k}=\Psi_{k}\circ \phi$ is the desired natural coordinate. \\
\indent Moreover, by construction, the union of the images $\Phi_{k}(\h^{2})$ for $k=1, \dots, n-2$ is a punctured neighbourhood of $p$, and two consecutive charts only intersect along a half-ray in their boundary.

\subsection{Crowned hyperbolic surfaces}
A crown $\mathcal{C}$ with $m\geq 1$ boundary cusps is an incomplete hyperbolic surface bounded by a closed geodesic boundary $c$ and a crown end consisting of bi-infinite geodesic $\{\gamma_{i}\}_{i=1}^{m}$ arranged in cyclic order, such that the right half-line of the geodesic $\gamma_{i}$ is asymptotic to the left half-line of the geodesic $\gamma_{i+1}$, where indices are intended modulo $m$. A crown comes equipped with a labelling of the boundary cusps compatible with the cyclic order. \\

A polygonal end $\mathcal{P}$ of a crown is the $\Z$-invariant bi-infinite chain of geodesic lines in $\h^{2}$ obtained by lifting the cyclically ordered collection of geodesics $\{\gamma_{i}\}_{i=1}^{m}$ in $\mathcal{C}$ to its universal cover, where $\mathbb{Z}$ is the group generated by the hyperbolic translation corresponding to the geodesic boundary $c$. Notice that the ideal points of the chain of geodesics of the polygonal end limit to the end point of the axis $\alpha$ of the lift of $c$. \\

The hyperbolic crowns we will consider come with an additional real parameter, the boundary twist, that we associate with the geodesic boundary. In the corresponding polygonal end in the universal cover, this can be thought of as the choice of a marked point on the axis $\alpha$ and the parameter is the signed distance of this point from the foot of the orthogonal arc from the cusp labelled with $"1"$ to $\alpha$. \\ 

Let $S$ denote a compact, oriented surface of genus $\tau \geq 1$ and $b\geq 1$ boundary components. A crowned hyperbolic surface is obtained by attaching crowns to a compact hyperbolic surface with geodesic boundaries by isometries along their closed boundaries. This results in an incomplete hyperbolic metric of finite area on the surface. We denote with $\mathcal{T}(S, m_{1}, \dots, m_{b})$ the Teichm\"uller space of crowned hyperbolic surfaces such that the $i$-th crown has $m_{i}\geq 1$ boundary cusps, for every $i=1, \dots, b$.  In this context the marking is a homeomorphism $f:S \rightarrow X$ sending a neighbourhood of the boundary to the crown end. Two marked hyperbolic surfaces with crowns $(X,f)$ and $(Y,g)$ are equivalent if there is an isometry $i:X \rightarrow Y$ that is homotopic to $g\circ f^{-1}$ via a homotopy that keeps each boundary component fixed, and $g\circ f^{-1}$ does not Dehn-twist around any crown end.

\begin{prop}[Lemma 2.16 \cite{Gupta}] The Teichm\"uller space of crowned hyperbolic surfaces is homeomorphic to $\R^{n}$, where $n=6\tau-6+\sum_{i=1}^{b}(m_{i}+3)$.
\end{prop}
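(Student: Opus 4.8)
The plan is to compute the dimension of $\mathcal{T}(S,m_1,\dots,m_b)$ by exhibiting an explicit Fenchel--Nielsen type parameterisation and showing that each coordinate ranges freely over a contractible space, so that the total space is homeomorphic to a Euclidean space of the asserted dimension. The guiding principle is to decompose a crowned hyperbolic surface into its \emph{compact core} --- a compact hyperbolic surface $S_0$ of genus $\tau$ with $b$ geodesic boundary components --- together with the data of the $b$ crowns that are glued along those boundaries. The dimension should then split as the dimension of the Teichm\"uller space of $S_0$ with geodesic boundary, plus the contribution of each crown.

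First I would recall that the Teichm\"uller space of a compact surface of genus $\tau$ with $b$ geodesic boundary components, where the boundary lengths are allowed to vary, is homeomorphic to $\R^{6\tau-6+3b}$: one obtains $6\tau-6+2b$ coordinates from a pants decomposition (length and twist for each interior curve, together with the boundary lengths), which already accounts for the $6\tau-6+2b$ part, and it is the standard Fenchel--Nielsen count. Next I would analyse a single crown $\mathcal{C}$ with $m$ boundary cusps glued along a boundary geodesic of fixed length $\ell$. A crown with $m$ boundary cusps is determined up to isometry by the hyperbolic structure on the ideal polygon cut out by the crown end, which in the universal cover is the polygonal end: fixing the length $\ell$ of the bounding geodesic, the remaining moduli are the $m$ ideal vertices. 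I would count these by the cross-ratio / shear coordinates on the chain of geodesics: a crown with $m$ cusps contributes $m$ real parameters (for instance the $m$ boundary-cusp ``widths'' or the shears across the $m$ lifted geodesics in a fundamental domain), and one must keep careful track of the closed-up condition that makes the $\Z$-invariant chain consistent.

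The bookkeeping that produces the summand $\sum_i(m_i+3)$ rather than $\sum_i m_i$ is where I expect the main subtlety, and this is the step I would treat most carefully. Relative to the boundary-length-and-twist data already counted in the compact core, attaching a crown with $m_i$ cusps along a boundary adds: the $m_i$ internal parameters of the crown end itself, plus the boundary twist parameter described in the excerpt (the signed distance along the axis $\alpha$), plus the fact that the boundary length is now a genuinely free modulus of the crown rather than being pinned. Summing the $m_i$ cusp parameters gives $\sum_i m_i$, and the three additional parameters per crown --- heuristically the boundary length, the boundary twist, and one further normalisation degree of freedom coming from the position of the crown relative to the core --- yield the extra $3b$, so that the grand total is $6\tau-6+\sum_{i=1}^b(m_i+3)$. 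The hard part is verifying that these coordinates are genuinely independent and jointly surjective onto $\R^n$ (so no collapsing or constraint reduces the count), and that the equivalence relation in the definition of the marking --- in particular the stipulation that $g\circ f^{-1}$ does \emph{not} Dehn-twist around any crown end --- is exactly what is needed to make the boundary-twist coordinate take values in all of $\R$ rather than in a circle or a quotient thereof.

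Concretely, I would finish by assembling a global homeomorphism $\mathcal{T}(S,m_1,\dots,m_b)\to\R^n$ out of these coordinates, using the fact that the gluing of a crown to the compact core along a geodesic of matched length is a real-analytic operation with real-analytic inverse, and checking continuity of the coordinates and their inverses. Since each coordinate lives in an open interval of $\R$ (lengths positive, twists and shears unconstrained) and the whole construction is a diffeomorphism onto a product of such intervals, the resulting space is homeomorphic to $\R^n$ with $n=6\tau-6+\sum_{i=1}^b(m_i+3)$, as claimed; the only place requiring genuine care beyond routine Fenchel--Nielsen bookkeeping is the correct accounting of the $+3$ per crown and the interaction with the marking convention.
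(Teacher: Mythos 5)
Your overall strategy --- decompose the crowned surface into a compact core of genus $\tau$ with $b$ geodesic boundary components and attach the crowns, then count Fenchel--Nielsen type coordinates --- is exactly the route the paper sketches (the paper itself does not prove this statement; it cites Gupta's Lemma 2.16 and only describes the coordinates). However, the bookkeeping in your decisive step is inconsistent and, taken literally, gives the wrong dimension. The correct accounting is: the Teichm\"uller space of the compact core, \emph{with boundary lengths free}, already accounts for all of the $6\tau-6+3b$ (a pants decomposition has $3\tau-3+b$ interior curves, each carrying a length and a twist, giving $6\tau-6+2b$, and the $b$ boundary lengths supply the remaining $b$; note your own parenthetical folds the boundary lengths into the $6\tau-6+2b$ count, which is off by $b$). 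Each crown then contributes exactly $m_i$ further parameters, not $m_i+3$: one signed real number (the boundary twist, i.e.\ the position relative to the marked point on the axis of the foot of the perpendicular from the cusp labelled $1$) and $m_i-1$ positive reals (the relative spacings along the boundary geodesic of the feet of the perpendiculars from consecutive cusps). This yields $6\tau-6+3b+\sum_i m_i=6\tau-6+\sum_{i=1}^{b}(m_i+3)$.

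Your version instead posits, on top of the $6\tau-6+3b$ already assigned to the core, ``$m_i$ internal parameters of the crown end plus three more per crown (boundary length, boundary twist, and a further normalisation degree of freedom).'' That double-counts the boundary length (it is already one of the $3b$ core parameters), counts the twist twice (the ``position of the crown relative to the core'' \emph{is} the boundary twist), and would produce $6\tau-6+\sum_i(m_i+6)$. Relatedly, a crown with $m$ cusps glued along a geodesic of fixed length is determined by $m-1$ positive reals together with the twist, i.e.\ $m$ parameters in total, not $m+1$. The remainder of your outline --- independence of the coordinates, and the role of the no-Dehn-twist convention in making the twist coordinate range over all of $\R$ rather than a circle --- is on target, but the dimension count as written does not close.
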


Here is a possible way to give coordinates to $\mathcal{T}(S, m_{1}, \dots, m_{b})$: the first $6\tau-6+3b$ parameters are the familiar Fenchel-Nielsen coordinates on the Teichm\"uller space of surfaces of genus $\tau$ and $b$ geodesic boundaries. Then, after fixing an identification of the universal cover of the surface with boundary with a domain in $\h^{2}$, the marked crowned hyperbolic surface is determined by the end points of the lifts of the boundary cusps in a fundamental domain. In order to keep track also of the twist parameters, we fix, in an equivariant way, a point on the lifts of each geodesic boundary so that the remaining $\sum_{i=1}^{b}m_{i}$ parameters can be defined as follows: $b$ real parameters are given by the signed distance between the base point fixed above and the foot of the geodesic arc exiting from the boundary cusp labelled with "1" intersecting the geodesic boundary orthogonally, and the other $\sum_{i=1}^{b}(m_{i}-1)$ are positive real numbers determined by the relative distance between the intersection points of the geodesic rays emanating from two consecutive cusps and orthogonal to the geodesic boundary.

\section{Construction of the maximal surface} \label{sec:maxsurface}
In the next sections we are going to construct globally hyperbolic maximal anti-de Sitter structures on $\Sigma\times \R$ starting from the data of a complete hyperbolic metric $h$ on $\Sigma$ of finite area and a meromorphic quadratic differential $q$ with poles of order at least $3$ at the punctures. We first find a complete equivariant maximal embedding into $\AdS_{3}$ with induced metric $I=2e^{2v}h$ and second fundamental form $II=2\Re(q)$. We will then describe its boundary at infinity and prove that $\pi_{1}(\Sigma)$ acts by isometries and properly discontinuously on its domain of dependence, thus inducing a globally hyperbolic anti-de Sitter structure on the quotient.\\

Let $h\in \T(\Sigma)$ be a complete hyperbolic metric of finite area on $\Sigma$ and let $q$ be a meromorphic quadratic differential with poles of order at least $3$ at the punctures. Recall that finding an equivariant maximal conformal embedding of $\tilde{\Sigma}$ into $\AdS_{3}$ is equivalent to finding a solution to the quasi-linear PDE (Section \ref{subsec:GHMC})
\[
	\frac{1}{2}\Delta_{h}v=e^{2v}-e^{-2v}\|q\|^{2}_{h}+\frac{1}{2}K_{h} \ .
\]
This is an example of vortex equation, recently studied in the context of Riemann surfaces with punctures in \cite{nie_poles}. We recall here for the convenience of the reader the main steps for the construction of the unique solution and the asymptotic estimates that will be used in the sequel. \\

The main idea consists in choosing another complete background metric $g$ in the same conformal class as $h$ such that 
\[
	1-\|q\|^{2}_{g}+\frac{1}{2}K_{g} \to 0 \ \ \ \ \ \text{at}  \ \ p_{i} \ .
\]
In this way, the function $u:\Sigma \rightarrow \R$ that satisfies $2e^{2v}h=2e^{2u}g$ is the solution of the differential equation
\[
	\frac{1}{2}\Delta_{g}u=e^{2u}-e^{-2u}\|q\|_{g}^{2}+\frac{1}{2}K_{g}
\]
and the assumptions on $g$ guarantees that $u=0$ is an approximate solution in a neighbourhood of the punctures. This metric $g$ is defined as a smooth interpolation between the metric $\frac{1}{2}h$ of constant curvature $-2$ and the flat metric induced by the quadratic differential. More precisely, we introduce a local coordinate $z_{i}$ in a neighbourhood $U_{i}$ of the puncture $p_{i}$ disjoint from the zeros of $q$ and define
\[
	g=\begin{cases}
		|q| \ \ \ \ \ \ \ \  \ \ \ \ \ \ \ \ \ \text{for} \ |z_{i}|<c_{i}  \\
		e^{\varphi_{i}}|dz_{i}|^{2} \ \ \ \ \ \ \ \ \  \text{for} \  c_{i}\leq |z_{i}| \leq C_{i} \\
		\frac{1}{2}h \ \ \ \ \ \ \ \ \ \ \ \ \ \ \ \  \text{for} \ |z_{i}|>C_{i} \ \text{and on} \ \Sigma\setminus U_{i}
	    \end{cases}
\]
for smooth interpolating functions $\varphi_{i}$. Moreover, we can assume that there exists $\delta_{i}>0$ such that $\|q\|_{g}^{2}\geq \delta_{i}$ on $U_{i}$ because $\|q(z_{i})\|_{g}^{2}\to 1$ when $z_{i}\to 0$. \\

\begin{prop}\label{prop:existence} There exists a bounded smooth function $u: \Sigma \rightarrow \R$ satisfying 
\begin{equation}\label{eq:PDEproof}
	\frac{1}{2}\Delta_{g}u=e^{2u}-e^{-2u}\|q\|_{g}^{2}+\frac{1}{2}K_{g} \ .
\end{equation}
\end{prop}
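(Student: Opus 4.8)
The plan is to solve the quasi-linear equation (\ref{eq:PDEproof}) by the method of sub- and super-solutions, carried out on an exhaustion of $\Sigma$ by compact subsurfaces, and then to recover a global solution by a limiting argument based on interior elliptic estimates. Write $F(x,u)=e^{2u}-e^{-2u}\|q\|_{g}^{2}+\tfrac{1}{2}K_{g}$, so that (\ref{eq:PDEproof}) becomes $\tfrac{1}{2}\Delta_{g}u=F(x,u)$. The decisive structural feature is the strict monotonicity $\partial_{u}F=2e^{2u}+2e^{-2u}\|q\|_{g}^{2}>0$; this is precisely what makes both the barrier method and the underlying monotone iteration work (and, incidentally, is what would also yield uniqueness).

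First I would record the two elementary bounds on the background metric that follow directly from its explicit construction: $K_{g}$ is bounded on $\Sigma$ (it equals $-2$ where $g=\tfrac{1}{2}h$, vanishes where $g=|q|$, and is a smooth function on the compact interpolation annuli), and $\|q\|_{g}^{2}$ is bounded above (it equals $1$ where $g=|q|$ and is bounded on the compact core and annuli). Using these I claim that a sufficiently large constant $u_{+}\equiv C$ is a super-solution and $u_{-}\equiv -C$ a sub-solution. Indeed $F(x,C)\geq e^{2C}-e^{-2C}\sup\|q\|_{g}^{2}-1$, which is nonnegative for $C$ large, so $\tfrac{1}{2}\Delta_{g}u_{+}=0\leq F(x,u_{+})$. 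For the sub-solution one uses that each $U_{i}$ is disjoint from the zeros of $q$, so $\|q\|_{g}^{2}\geq\delta_{i}>0$ there, while on the compact complement $K_{g}$ is strictly negative away from the finitely many zeros of $q$ and $\|q\|_{g}^{2}$ is bounded below off these zeros; in either region $F(x,-C)\leq 0$ once $C$ is large enough, whence $\tfrac{1}{2}\Delta_{g}u_{-}=0\geq F(x,u_{-})$.

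Next I would fix an exhaustion $\Omega_{1}\subset\Omega_{2}\subset\cdots$ of $\Sigma$ by smooth compact subsurfaces with $\bigcup_{k}\Omega_{k}=\Sigma$. On each $\Omega_{k}$ the constants $\pm C$ remain a super- and a sub-solution, so the standard monotone iteration for the Dirichlet problem with boundary datum $0$ produces a smooth solution $u_{k}$ of $\tfrac{1}{2}\Delta_{g}u_{k}=F(x,u_{k})$ on $\Omega_{k}$ satisfying $-C\leq u_{k}\leq C$, with the bound \emph{uniform in $k$}. To pass to the limit, on any fixed $\Omega_{j}$ this uniform $C^{0}$ bound together with interior Schauder estimates—available because $g$ has bounded geometry, being flat of the form $|q|$ near each puncture and smooth on the compact core—gives a uniform $C^{2,\alpha}(\Omega_{j})$ bound on $u_{k}$ for all large $k$. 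A diagonal extraction then yields a subsequence converging in $C^{2}_{\mathrm{loc}}$ to a function $u$ on $\Sigma$ with $|u|\leq C$ solving (\ref{eq:PDEproof}), and elliptic bootstrapping makes $u$ smooth.

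I expect the genuine obstacle to lie in this final step: the non-compactness of $\Sigma$ prevents a direct appeal to the compact sub/super-solution theorem, and the convergence of the $u_{k}$ hinges on interior estimates that remain controlled all the way out to the punctures. This is exactly where the careful choice of the background metric $g$ does the essential work, since taking $g=|q|$ near each $p_{i}$ makes $u\equiv 0$ an approximate solution there and furnishes the uniformly controlled (flat) local geometry needed for the estimates to be uniform in $k$.
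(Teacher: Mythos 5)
Your argument is correct and proves the proposition as stated, but it follows a genuinely different route from the paper's. The paper also uses sub- and supersolutions, but it outsources the existence machinery on the open surface to a general theorem of Wan (\cite[Theorem 9]{wan}), so that the entire content of its proof is the construction of two carefully tailored barriers: a supersolution $u^{+}=\beta f$ with $f=|z_{i}|^{2\alpha_{i}}$ near each puncture, and a subsolution $u^{-}=\max(w,-B)$ built from the function $w$ satisfying $e^{2w}g=|q|$, which is an exact solution away from the zeros of $q$. You instead take the trivial constant barriers $\pm C$ --- which do work, since $K_{g}$ and $\|q\|_{g}^{2}$ are globally bounded, $\|q\|_{g}^{2}\geq\delta_{i}$ on $U_{i}$, and $K_{g}=-2$ off the $U_{i}$ --- and redo the existence machinery by hand via exhaustion, monotone iteration for Dirichlet problems on compact pieces, and interior Schauder estimates plus diagonal extraction. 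This is a sound and more self-contained proof of existence of a bounded solution (and, as you note, the monotonicity of $F$ in $u$ also yields uniqueness among bounded solutions). The trade-off is that the paper's non-constant barriers are not chosen merely to prove existence: the sandwich $w\leq u\leq\beta|z_{i}|^{2\alpha_{i}}$ is quoted verbatim afterwards, both to conclude $I=2e^{2u}g\geq 2|q|$ and, in Proposition \ref{prop:estimates}, to know that the solution is non-negative and tends to $0$ at the punctures. With constant barriers that information is lost and would have to be recovered separately, e.g.\ by comparing your solution against $w$ and $\beta f$ (which on a non-compact surface needs an extra maximum-principle argument) or by appealing to the uniqueness of the complete solution from \cite{nie_poles}. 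One small slip: in your supersolution inequality the term $-1$ should be $\tfrac{1}{2}\inf K_{g}$, since nothing forces $K_{g}\geq-2$ on the interpolation annuli; this is harmless because $K_{g}$ is bounded. Also, for the diagonal extraction you only need interior estimates on each fixed compact $\Omega_{j}$, with constants allowed to depend on $j$, so no uniform control ``out to the punctures'' is actually required for existence.
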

\begin{proof} Let $F(x,u)=e^{2u}-e^{-2u}\|q\|^{2}_{g}+\frac{1}{2}K_{g}$. Since $F$ is an increasing function of $u$, the solution to Equation (\ref{eq:PDEproof}) is guaranteed (\cite[Theorem 9]{wan}) by the existence of two continuous functions $u^{\pm}:\Sigma\rightarrow \R$ such that
\[
	\Delta u^{+}\leq F(u^{+},x), \ \ \ \Delta u^{-}\geq F(u^{-},x) \ \ \ \ \text{and} \ \ \ u^{-}\leq u^{+} \ .
\]
Let us start with the supersolution $u^{+}$. Let $f:\Sigma\rightarrow \R$ be a positive smooth function such that $f(z_{i})=|z_{i}|^{2\alpha_{i}}$ on the neighbourhood $\{|z_{i}|<c_{i}\}$ of the puncture $p_{i}$ for some $\alpha_{i}>0$ to be chosen later. For any $\beta\in \R$ we consider $u^{+}=\beta f$. We claim that it is possible to find $\beta>0$ large enough and $\alpha_{i}>0$ sufficiently small so that $u^{+}$ is a supersolution. In fact, on $V_{i}=\{|z_{i}|<c_{i}\}\subset U_{i}$, we can find a constant $D_{i}>0$ such that $|q|\geq D_{i}|z|^{-2}$ and we have
\begin{align*}
	&\frac{1}{2}\Delta_{g}(\beta |z_{i}|^{2\alpha_{i}})-e^{2\beta|z_{i}|^{2\alpha_{i}}}+ e^{-2\beta|z_{i}|^{2\alpha_{i}}}\|q\|^{2}_{g}-\frac{1}{2}K_{g} \\
	\leq & \ \frac{1}{2}\beta\alpha_{i}^{2}D_{i}|z_{i}|^{2\alpha_{i}}-e^{2\beta|z_{i}|^{2\alpha_{i}}}+e^{-2\beta|z_{i}|^{2\alpha_{i}}} \\
	\leq &\left( \frac{\alpha_{i}^{2}D_{i}}{2}-2 \right) u^{+}+(e^{-u^{+}}-e^{2u^{+}}+2u^{+})+e^{-2u^{+}} \ ,
\end{align*}
which can be made negative, because the term in the middle is always non-positive and we can choose $\alpha_{i}$ small enough and $\beta$ large enough so that the sum of the first and last term is negative. Therefore, $u^{+}$ is a supersolution on $V_{i}$ for every $\alpha_{i}>\alpha_{0}$ and $\beta>\beta_{0}$. Outside $V_{i}$, we do not have control on the curvature of $g$ and on the Laplacian of $f$, but knowing that they are bounded, we can increase $\beta$ so that
\[
	\frac{\beta}{2}\Delta_{g}f-e^{\beta f}+e^{-2\beta f}\|q\|^{2}_{g}-\frac{1}{2}K_{g}\leq 0
\]
because $e^{\beta f}$ grows the fastest when $\beta\to +\infty$. This proves that $u^{+}$ is a supersolution everywhere on $\Sigma$. \\
\indent As for the subsolution, let $w:\Sigma \setminus q^{-1}(0) \rightarrow \R$ be half of the logarithmic density of the flat metric $|q|$ with respect to $g$, that is $e^{2w}g=|q|$. We claim that $w$ is a solution outside the zeros of $q$: in fact,
\begin{align*}
	&\frac{1}{2}\Delta_{g}w-e^{2w}+e^{-2w}\|q\|_{g}^{2}-\frac{1}{2}K_{g}= \\
	&\frac{1}{2}(\Delta_{g}w-K_{g})-e^{2w}+e^{2w}\|q\|_{|q|}^{2}=0
\end{align*}
because $\|q\|_{|q|}=1$ and the first term vanishes because the metric $|q|$ is flat outside the zeros of $q$. Notice that $w$ tends to $-\infty$ at a zero of $q$ and, by our definition of the open sets $U_{i}$ and of the metric $g$, the background metric on $\Sigma$ has constant curvature $-2$ in a small neighbourhood of the zeros of $q$. Since any negative constant is a subsolution where the metric $g$ has constant curvature $-2$, the function
\[
	u^{-}=\begin{cases}
			w \ \ \ \ \ \  \ \ \ \ \ \ \  \ \ \ \ \ \ \ \ \ \ \text{ on $U_{i}$} \\
			\max(w, -B) \ \ \ \ \ \ \ \ \ \text{ on $\Sigma\setminus U_{i}$}
		\end{cases}
\]
for a sufficiently large $B>0$ is a continuous subsolution, being it the maximum of two subsolutions.
\end{proof}

We remark that the resulting metric $I=2e^{2u}g$ is complete because $g$ is complete and $u$ is bounded. Moreover, the subsolution we found implies that $I\geq 2|q|$. Uniqueness follows then from a general result about vortex equations:
\begin{prop}[\cite{nie_poles} Theorem 2.11] For every non-zero holomorphic quadratic differential on $\Sigma$ there exists a unique complete solution to Equation (\ref{eq:PDE}).
\end{prop}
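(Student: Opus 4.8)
The plan is to prove uniqueness by a maximum principle argument, the existence part being already covered by Proposition~\ref{prop:existence}. I would work with the background metric $g$ and Equation~(\ref{eq:PDEproof}), noting that the $h$- and $g$-formulations differ only by the fixed conformal factor relating $h$ and $g$, so a complete solution of one corresponds to a complete solution of the other. Suppose $u_{1}$ and $u_{2}$ are two complete solutions and set $w=u_{1}-u_{2}$. Subtracting the two copies of Equation~(\ref{eq:PDEproof}) gives
\[
	\frac{1}{2}\Delta_{g}w=(e^{2u_{1}}-e^{2u_{2}})-\|q\|_{g}^{2}(e^{-2u_{1}}-e^{-2u_{2}}) \ ,
\]
the curvature term having cancelled. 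The right-hand side is the difference $F(x,u_{1})-F(x,u_{2})$ of the nonlinearity used in the proof of Proposition~\ref{prop:existence}, which is strictly increasing in the second variable since $\partial_{u}F=2e^{2u}+2e^{-2u}\|q\|_{g}^{2}>0$. Consequently $\Delta_{g}w$ has the same sign as $w$ at every point, and in particular $w$ can attain neither a positive interior maximum nor a negative interior minimum unless it is locally constant.

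The crux is therefore to rule out that $w$ approaches its supremum only along the ends, i.e. to control both solutions near the poles. Here I would use the lower bound $I\geq 2|q|$ established from the subsolution, which in the region where $g=|q|$ reads $u_{i}\geq 0$, together with the asymptotic behaviour of complete solutions near a pole of order $n\geq 3$. In those coordinates the equation degenerates to the sinh-Gordon type relation $\tfrac{1}{2}\Delta_{g}u=e^{2u}-e^{-2u}$, for which $u\equiv 0$ is the model solution, and completeness is what selects the decaying branch, forcing $u_{i}\to 0$ at the puncture. This asymptotic analysis—carried out in \cite{nie_poles}—is the technical heart of the statement and the step I expect to be the main obstacle; in particular it yields that each $u_{i}$ is globally bounded and that $w\to 0$ at every puncture.

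With this control in hand the conclusion is immediate. For every $\varepsilon>0$ the superlevel set $\{w\geq \varepsilon\}$ is contained in a compact subset of $\Sigma$, because $w\to 0$ at the punctures; if $w$ exceeded $\varepsilon$ somewhere, it would attain a positive maximum at an interior point of this compact set, contradicting the sign condition $\Delta_{g}w>0$ on $\{w>0\}$. Hence $w\leq \varepsilon$ for all $\varepsilon>0$, so $w\leq 0$, and exchanging the roles of $u_{1}$ and $u_{2}$ gives $w\geq 0$; therefore $u_{1}=u_{2}$. Alternatively, since the boundedness of the $u_{i}$ makes $\partial_{u}F$ uniformly positive and $(\Sigma,g)$ has Ricci curvature bounded below—being flat near the poles and of constant curvature $-2$ elsewhere—one could invoke the Omori–Yau maximum principle directly on the complete manifold and reach the same conclusion without the compact exhaustion.
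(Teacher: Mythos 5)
The paper does not actually prove this proposition: it imports it wholesale as Theorem 2.11 of \cite{nie_poles}, so there is no in-paper argument to compare against. Judged on its own merits, your maximum-principle skeleton (subtract the two equations, use that $F(x,u)$ is strictly increasing in $u$, rule out an interior positive maximum of $w=u_{1}-u_{2}$) is the standard and correct frame, but the proof has a genuine hole at exactly the step you flag, and the hole is not a minor technicality — it is the content of the theorem. First, the bound $I\geq 2|q|$, equivalently $u_{i}\geq 0$ near the poles, was obtained in Proposition \ref{prop:existence} from a subsolution used to \emph{construct} one particular solution; it is not available for an arbitrary competitor complete solution $u_{2}$, and establishing it for every complete solution requires a separate argument (typically Cheng--Yau/Omori--Yau applied on the complete surface $(\tilde{\Sigma},2e^{2u_{2}}g)$, which in turn needs a curvature bound for that unknown metric). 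Second, and more seriously, the assertion that ``completeness selects the decaying branch, forcing $u_{i}\to 0$ at the puncture'' points in the wrong direction: completeness of $2e^{2u}g$ only prevents $u$ from going to $-\infty$ too fast; it does not exclude a complete solution with $u\to+\infty$ at a pole. So the upper control on an arbitrary complete solution — which is what your compact-exhaustion argument and your Omori--Yau alternative both require ($w$ bounded above, or $\{w\geq\varepsilon\}$ compact) — is precisely what is missing, and deferring it to \cite{nie_poles} is circular here, since that asymptotic control is the heart of the very theorem being proved.

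To close the gap one would need, for every complete solution $u$, a two-sided barrier near each pole: a lower bound of the type $u\geq w_{|q|}$ (flat-metric domination) proved via a generalized maximum principle on the complete surface, and an upper bound by an explicit supersolution of the model equation $\tfrac12\Delta u=e^{2u}-e^{-2u}$ on the half-planes, from which $u\to 0$ at the pole follows. With those two estimates in hand, the rest of your argument (sign of $\Delta_{g}w$ on $\{w>0\}$, compactness of superlevel sets, symmetry in $u_{1},u_{2}$) is correct and completes the proof.
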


Combining the above results we obtain:
\begin{teo}\label{teo:maxsurface} For any complete hyperbolic metric $h$ on $\Sigma$ of finite area and for any meromorphic quadratic differential $q$ on $\Sigma$ with poles of order at least $3$ at the punctures there exists a unique complete equivariant maximal embedding $\tilde{\sigma}:\tilde{\Sigma}\rightarrow \AdS_{3}$ with induced metric $I$ conformal to $h$ and second fundamental form $II=\Re(2q)$. Moreover, the principal curvatures are in $(-1,1)$.
\end{teo}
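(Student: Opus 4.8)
The plan is to assemble the analytic input of Proposition \ref{prop:existence} and the uniqueness statement for complete solutions of the vortex equation with the Fundamental Theorem of surfaces in $\AdS_{3}$ recalled in Section \ref{sec:maxAdS}. First I would produce the embedding data. Given the solution $u$ of Equation \eqref{eq:PDEproof} provided by Proposition \ref{prop:existence}, the metric $I=2e^{2u}g$ is conformal to $g$ and hence to $h$, so it can be written as $I=2e^{2v}h$ for a function $v$ solving the original equation \eqref{eq:PDE}. Setting $B=I^{-1}\Re(2q)$, the discussion in Section \ref{subsec:GHMC} shows that $B$ is traceless (so that $\tilde{\sigma}$ will be maximal), that the Codazzi equation $d^{\nabla}B=0$ holds because $q$ is holomorphic, and that the Gauss equation $K_{I}=-1-\det(B)$ is equivalent to $v$ solving \eqref{eq:PDE}. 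Thus $(I,B)$ satisfy the compatibility equations of the Fundamental Theorem.

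Next I would promote this to an equivariant embedding. Pulling $(I,B)$ back to the universal cover $\tilde{\Sigma}$, which is simply connected, the Fundamental Theorem produces a space-like maximal embedding $\tilde{\sigma}:\tilde{\Sigma}\to\AdS_{3}$, unique up to post-composition by a global isometry, with induced metric the lift of $I$ and second fundamental form $\Re(2q)$. Completeness of $\tilde{\sigma}$ follows from completeness of $I=2e^{2u}g$, established in the remark after Proposition \ref{prop:existence}. For equivariance, the lifted data are invariant under the deck action of $\pi_{1}(\Sigma)$, so for each $\gamma$ the embeddings $\tilde{\sigma}$ and $\tilde{\sigma}\circ\gamma$ share the same induced metric and shape operator; by the uniqueness clause of the Fundamental Theorem they differ by a global isometry $\rho(\gamma)\in\Isom(\AdS_{3})$, and $\gamma\mapsto\rho(\gamma)$ is the sought representation. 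Uniqueness of $\tilde{\sigma}$ up to global isometry then reduces to uniqueness of the complete metric $I$, which is exactly the cited uniqueness theorem for complete solutions of the vortex equation, since $B$ is determined by $q$.

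Finally I would verify the bound on the principal curvatures, which is where the genuine work lies. Since $B$ is traceless its eigenvalues are $\pm\lambda$ with $\lambda\geq 0$, and a coordinate computation as in Section \ref{subsec:GHMC} gives $-\det(B)=\lambda^{2}=e^{-4u}\|q\|_{g}^{2}$. Writing $e^{2w}g=|q|$ for the subsolution $w$ used in the proof of Proposition \ref{prop:existence}, one has $\|q\|_{g}=e^{2w}$, whence $\lambda=e^{2(w-u)}$. Because $u\geq u^{-}\geq w$, the weak bound $\lambda\leq 1$ is immediate; the point is the strict inequality. On $\Sigma\setminus q^{-1}(0)$ both $u$ and $w$ solve the same elliptic equation and $u-w\geq 0$, so, writing the difference of the (monotone in $u$) right-hand sides as $c(x)(u-w)$ with $c\geq 0$, the strong maximum principle forces either $u\equiv w$ or $u>w$ everywhere. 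The first alternative is excluded because $u$ is bounded while $w\to-\infty$ at the zeros of $q$, so $u>w$ and $\lambda<1$ away from the zeros, while at the zeros $q=0$ gives $\lambda=0$. Therefore the principal curvatures lie in $(-1,1)$ throughout, completing the proof. The main obstacle is precisely this strict comparison, since completeness, equivariance, existence and uniqueness are all supplied by the results quoted above.
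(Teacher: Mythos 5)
Your proposal is correct, and the first two thirds of it (producing $(I,B)$ from Proposition \ref{prop:existence}, invoking the Fundamental Theorem on $\tilde{\Sigma}$, getting equivariance from its uniqueness clause, and reducing uniqueness of $\tilde{\sigma}$ to the uniqueness of complete solutions of the vortex equation) is exactly what the paper compresses into ``follows from the above discussion''. Where you genuinely diverge is the bound on the principal curvatures. The paper only computes $\lambda^{2}=e^{-4u}\|q\|_{g}^{2}\to 1$ at the punctures, concludes that $\lambda$ is bounded, and then outsources the strict inequality to a general fact about complete maximal surfaces with bounded curvature, namely \cite[Lemma 3.11]{Schlenker-Krasnov}. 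You instead prove strictness directly: writing $\lambda=e^{2(w-u)}$ with $e^{2w}g=|q|$, you note that $u$ and $w$ solve the same equation on $\Sigma\setminus q^{-1}(0)$, apply the strong maximum principle to $u-w\geq 0$, and exclude $u\equiv w$ because $u$ is bounded while $w\to-\infty$ at the zeros of $q$. This is self-contained and pinpoints where strictness comes from, but it does rely on $q$ having at least one zero — a point worth stating explicitly. It holds here because the zero and pole orders satisfy $\sum(\text{zero orders})=-2\chi(\overline{\Sigma})+\sum_{i}n_{i}\geq -2\chi(\Sigma)+N>0$, and it is genuinely needed: the horospherical surface, which corresponds to the zero-free differential $d\omega^{2}$ on $\C$, is a complete maximal surface with $\lambda\equiv 1$, so completeness and maximality alone cannot give the open bound. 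In short, your route trades the paper's external citation for a slightly longer but elementary comparison argument; both are valid.
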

\begin{proof} Existence and uniqueness of such embedding follows from the above discussion. Let $\lambda$ be the positive principal curvature of the maximal surface. By definition of $q$, we have
\[
	-\lambda^{2}=\det(B)=e^{-4u}\|q\|^{2}_{g}\to 1
\]
at the punctures. Therefore, $\lambda$ is bounded and a classical fact about maximal surfaces in anti-de Sitter space (\cite[Lemma 3.11]{Schlenker-Krasnov}) implies that $\lambda \in [0,1)$.
\end{proof}

\subsection{Asymptotic estimates} In order to describe the geometry of the maximal surface, we will also need the following precise estimate for the solution $v$ in a neighbourhood of a puncture. Recall that such a neighbourhood is covered by a collection of half-planes, in which the quadratic differential pulls back to $d\omega^{2}$. By an abuse of notation, we will still indicate with $v$ the function such that $2e^{2v}|d\omega|^{2}$ equals the induced metric on $\tilde{\sigma}(\tilde{\Sigma})$ in the $\omega$-coordinates.

\begin{prop}\label{prop:estimates} Let $\omega$ be a natural coordinate for $q$ defined on a standard half-plane in a neighbourhood of a puncture $p$. Then 
\[
	v(\omega)=O\left( \frac{e^{-2\sqrt{2}|\omega|}}{\sqrt{|\omega|}} \right) \ \ \ \text{as $|\omega| \to +\infty$} \ .
\]
\end{prop}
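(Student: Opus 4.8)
The plan is to reduce the equation to its flat normal form in the natural coordinate and then compare the solution with an explicit radial solution of the linearisation, built from a modified Bessel function, whose asymptotics match the claimed rate exactly.

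First I would record the shape of the equation in the $\omega$-coordinate. Since the background metric $g$ coincides with the flat metric $|q|=|d\omega|^{2}$ near the puncture, we have $K_{g}=0$ and $\|q\|_{g}=1$, so $v$ (in the renamed convention $2e^{2v}|d\omega|^{2}=I$) solves the translation-invariant equation $\Delta v=2(e^{2v}-e^{-2v})=4\sinh(2v)$, where $\Delta$ is the Euclidean Laplacian. From the subsolution produced in Proposition \ref{prop:existence} we already know $I\geq 2|q|$, i.e. $v\geq 0$, and that $v$ is bounded. I would first upgrade this to $v(\omega)\to 0$ as $|\omega|\to\infty$: if not, a sequence $\omega_{j}\to\infty$ with $v(\omega_{j})\geq c>0$ yields, after translating $\omega_{j}$ to the origin and invoking interior elliptic estimates (boundedness of $v$ gives uniform local $C^{2}$ bounds), a bounded nonnegative entire solution $v_{\infty}$ of $\Delta v_{\infty}=4\sinh(2v_{\infty})$ with $v_{\infty}(0)\geq c$; since $v_{\infty}$ is bounded and subharmonic on $\R^{2}$ it is constant by Liouville, hence identically $0$, a contradiction. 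Here the blow-up domain is genuinely the plane, because the metric $|q|$ is smooth and flat across the gluing half-lines, the only cone points being the zeros of $q$, which lie away from the puncture.

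With decay in hand, the core of the argument is a \emph{barrier comparison}. For $v\geq 0$ one has $\sinh(2v)\geq 2v$, hence $\Delta v\geq 8v$, so $v$ is a subsolution of the linear operator $L=\Delta-8$. The radial function $\phi_{0}(\omega)=K_{0}(2\sqrt{2}\,|\omega|)$, with $K_{0}$ the modified Bessel function of order $0$, is an exact positive solution of $L\phi_{0}=0$ away from the origin, and it has precisely the asymptotics $K_{0}(s)\sim\sqrt{\pi/(2s)}\,e^{-s}$, so that $\phi_{0}(\omega)$ is comparable to $e^{-2\sqrt{2}|\omega|}/\sqrt{|\omega|}$. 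Fixing $R_{0}$ large and setting $\bar v=C\,\phi_{0}$ with $C$ large enough that $\bar v\geq v$ on the circle $\{|\omega|=R_{0}\}$ (possible since $v$ is bounded and $\phi_{0}>0$ there), the difference $v-\bar v$ satisfies $L(v-\bar v)\geq 0$ on the deleted neighbourhood $\{|\omega|>R_{0}\}$, is $\leq 0$ on the bounding circle, and tends to $0$ at the puncture. Because the zeroth-order coefficient of $L$ is negative, a subsolution cannot attain a positive interior maximum; combined with the vanishing at $p$ this forces $v\leq\bar v=C\,\phi_{0}$ throughout, which is the claimed bound.

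The main obstacle I anticipate is making the comparison rigorous across the whole neighbourhood of the puncture rather than in a single chart: the region $\{|\omega|>R_{0}\}$ has free boundary along the gluing rays, so the barrier must be set up either on the full deleted neighbourhood of $p$ obtained by gluing the $n-2$ half-planes (where one must check that $|\omega|$, hence $\phi_{0}$, glues consistently, or else replace $\phi_{0}$ by a barrier expressed through the $|q|$-distance to $p$), or through a reflection and monotonicity argument controlling $v$ along the gluing rays. The second delicate point is justifying the maximum principle on an unbounded region, for which the preliminary decay $v\to 0$ is precisely what makes a Phragm\'en--Lindel\"of type conclusion valid. Pinning the sharp constant $2\sqrt{2}$ together with the polynomial factor $|\omega|^{-1/2}$ relies entirely on comparing with the two-dimensional radial solution $K_{0}$ rather than with a crude exponential barrier.
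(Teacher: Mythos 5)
Your reduction to the flat equation $\Delta v=4\sinh(2v)$, the observation that $v\ge 0$ and $\Delta v\ge 8v$, and the idea of comparing with solutions of $\Delta h=8h$ built from modified Bessel functions are all exactly the right ingredients, and your identification of $2\sqrt2$ and the $|\omega|^{-1/2}$ factor with the asymptotics of a Bessel solution is on target. But the step you flag as ``the main obstacle I anticipate'' is not a technicality to be smoothed over later: it is the crux, and as written your argument does not close it. The exterior barrier $\bar v=C\,K_{0}(2\sqrt2\,|\omega|)$ on $\{|\omega|>R_{0}\}$ inside a single half-plane chart has, as part of its boundary, the two gluing rays, on which you have no a priori comparison $v\le\bar v$ --- indeed the decay of $v$ along those rays is precisely what needs to be proved. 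Your proposed repairs are not yet arguments: the ``$|q|$-distance to $p$'' is infinite (the pole is at infinite distance), so that barrier is not defined as stated; and if one instead uses the distance to a fixed interior point or to the inner boundary of the end, one must contend with the fact that this distance function is only Lipschitz (the end is a flat cone-like surface of angle $(n-2)\pi$, with cut locus, and for even-order poles a translational holonomy term), and $K_{0}(2\sqrt2\,\rho)$ for $\rho$ a minimum of smooth distance functions is a \emph{maximum} of supersolutions, which is not automatically a supersolution. None of this is fatal, but it is real work that the proposal defers.

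It is worth contrasting this with how the paper gets around the same difficulty. Rather than an exterior barrier, it makes an \emph{interior} comparison on large balls: a boundary point $\omega$ of one half-plane lies in the adjacent chart as well, so the union of the two charts contains a genuine Euclidean ball $B_{r(\omega)}(\omega)$ with $r(\omega)=|\omega|-c$, on which $v$ still solves the flat equation; comparing $v$ (which satisfies $\Delta v\ge 8v$ and $v\le 1$) with the Dirichlet solution $h(\zeta)=I_{0}(2\sqrt2\,|\zeta|)/I_{0}(2\sqrt2\,r(\omega))$ of $\Delta h=8h$, $h=1$ on $\partial B_{r(\omega)}$, gives $v(\omega)\le h(0)=O(|\omega|^{1/2}e^{-2\sqrt2\,|\omega|})$. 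This only needs the maximum principle on compact balls, so the gluing rays cause no trouble; it yields exponential decay (with the ``wrong'' polynomial factor) on the boundary of each half-plane, hence integrability of $v$ there, and the sharp interior rate $e^{-2\sqrt2\,|\omega|}/\sqrt{|\omega|}$ is then obtained by invoking Lemma~5.8 of the Dumas--Wolf paper rather than by a global barrier. Also note that the preliminary fact $v\to 0$ at the puncture, for which you run a blow-up/Liouville argument, is already immediate from the explicit supersolution $\beta|z_{i}|^{2\alpha_{i}}$ of Proposition~\ref{prop:existence}. So: right analytic mechanism, genuinely different global strategy, but the one step your strategy must supply beyond the paper's --- a supersolution of $\Delta h=8h$ defined and valid across the whole glued end --- is exactly the step left open.
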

\begin{proof} In a natural coordinate the function $v$ satisfies the PDE
\begin{equation}\label{eq:PDEerror}
	\frac{1}{2}\Delta v=e^{2v}-e^{-2v}
\end{equation}
because $|q(\omega)|=1$ and the background metric is flat.  The subsolution and supersolution in Proposition \ref{prop:existence} also show that $v$ is non-negative and infinitesimal. In particular, we can assume that $v\leq 1$ on every half-plane and we have that $e^{2v}-e^{-2v}\geq 4v$ . The asymptotic estimate will then follow from \cite[Lemma 5.8]{DW} provided we show that the restriction of $v$ to the boundary of a half-plane is integrable. In order to show this, we prove that $v$ is exponentially decaying. Let $\omega$ be a point in the boundary of the half-plane. If $|\omega|$ is sufficiently large, this point is actually contained also in the precedent or the following standard half-plane. In both cases, we can find a constant $c>0$ depending only on the gluing map between the half-planes (hence only on $q$) and a ball of radius $r(\omega)=|\omega|-c$ centered at $\omega$, which is entirely contained in these two coordinate charts. Using a coordinate $\zeta$ in this ball $B_{r(\omega)}$, the function $v$ satisfies the same Equation (\ref{eq:PDEerror}) on this ball. Therefore, the solution of the Dirichlet problem
\[
	\begin{cases} \Delta h=8h \\
				h_{|_{\partial B_{r(\omega)}}}=1
	\end{cases}   
\]
is a supersolution and as such is greater than $v$. It is then well-known that the solution of the above Dirichlet problem is the function
\[
	h(\zeta)=\frac{I_{0}(2\sqrt{2}|\zeta|)}{I_{0}(2\sqrt{2}|r(|\omega|))}
\]
where $I_{0}$ is the modified Bessel function of the first kind (\cite[Formula 9.6.1]{handbook_formulas}). Hence, (\cite[Formula 9.7.1]{handbook_formulas})
\[
	v(\omega)\leq h(0)=O(|\omega|^{\frac{1}{2}}e^{-2\sqrt{2}|\omega|}) \ \ \ \ \ \ \text{as $|\omega| \to +\infty$} \ .
\]
\end{proof}

\section{Description of the boundary at infinity}\label{sec:boundary}
The equivariant maximal embedding $\tilde{\sigma}:\tilde{\Sigma}\rightarrow \AdS_{3}$ comes with a representation $\rho:\pi_{1}(\Sigma):\rightarrow \Pp\SO_{2}(2,2)$ such that
\[
	\tilde{\sigma}(\gamma\cdot x)=\rho(\gamma)\tilde{\sigma}(x) \ \ \ \ \ \forall x\in \tilde{\Sigma} \ \ \forall \gamma \in \pi_{1}(\Sigma) \ .
\]
Identifying $\Pp\SO_{0}(2,2)$ with $\dPSL$, $\rho$ determines and is determined by a pair of representations $\rho_{l,r}:\pi_{1}(\Sigma)\rightarrow \PSL(2,\R)$. In order to understand them, we make use of the theory of harmonic maps between surfaces. Infact, the maximality of $\tilde{\sigma}(\tilde{\Sigma})$ implies that the Gauss map $G:\tilde{\Sigma}\rightarrow \h^{2}\times \h^{2}$ is harmonic and $(\rho_{l},\rho_{r})$-equivariant. Moreover, the bound on the principal curvatures ensures that, if we denote with $\pi_{l}$ and $\pi_{r}$ the two projections onto the left and right factors, the maps $(G\circ \pi_{l})$ and $(G\circ \pi_{r})$ are harmonic diffeomorphisms into their image (\cite{bon_schl}, \cite[Lemma 6.1]{Tambu_regularAdS}). Then, the hyperbolic metrics
\[
	(G\circ \pi_{l})^{*}g_{\h^{2}} \ \ \ \text{and} \ \ \ (G\circ \pi_{r})^{*}g_{\h^{2}} 
\]
descend to hyperbolic metrics $h_{l}$ and $h_{r}$ on $\Sigma$ with holonomy $\rho_{l}$ and $\rho_{r}$, respectively. Now, since the Hopf differentials of these harmonic maps are $\pm 2iq$, where $\Re(2q)$ is the second fundamental form of the maximal embedding (\cite{Schlenker-Krasnov}, \cite{Tambu_poly}), a recent result by Gupta (\cite[Theorem 1.2]{Gupta}) implies that $(\Sigma, h_{l})$ and $(\Sigma, h_{r})$ are crowned hyperbolic surfaces, thus showing that the representations $\rho_{l}$ and $\rho_{r}$ are discrete and faithful, with hyperbolic peripheral elements. \\

Since the maximal surface is complete, its boundary at infinity is a locally achronal curve $\Gamma$ and determines a domain of dependence $\mathcal{D}(\Gamma)\subset \AdS_{3}$ by considering points whose dual space-like plane is disjoint from $\Gamma$, on which the representation $\rho=(\rho_{l}, \rho_{r})$ acts properly discontinuously. Our knowledge on the representation $\rho$ allows us to describe the curve $\Gamma$ at least partially. Recall that we can identify the boundary at infinity of $\AdS_{3}$ with $\R\Pp^{1}\times \R\Pp^{1}$ and $\rho_{r,l}$ act on each factor by projective transformations. Given an element $\gamma \in \pi_{1}(\Sigma)$, let $x_{\bullet}^{\pm}(\gamma)$ denote the attracting and repelling fixed points of $\rho_{\bullet}(\gamma)$. These define four points on the boundary at infinity of $\AdS_{3}$:
\begin{align*}
	x^{++}(\rho(\gamma))&=(x_{l}^{+}(\gamma), x_{r}^{+}(\gamma)) \ \ \  &x^{+-}(\rho(\gamma))=(x_{l}^{+}(\gamma), x_{r}^{-}(\gamma)) \\
	x^{--}(\rho(\gamma))&=(x_{l}^{-}(\gamma), x_{r}^{-}(\gamma)) \ \ \ \ \  &x^{-+}(\rho(\gamma))=(x_{l}^{-}(\gamma), x_{r}^{+}(\gamma))  
\end{align*}
It follows immediately from the definition that 
\[
	\lim_{n\to +\infty}\rho(\gamma)^{n} \cdot x=x^{++}(\rho(\gamma))
\]
for every $x \in \partial_{\infty}\AdS_{3} \setminus \{ x^{+-}(\rho(\gamma)), x^{-+}(\rho(\gamma)), x^{--}(\rho(\gamma))\}$. Therefore, the limit set 
\[
	\Lambda_{\rho}=\overline{\{ x^{++}(\gamma(\rho)) \in \partial_{\infty}AdS_{3} \ | \ \gamma \in \pi_{1}(\Sigma) \}} 
\]
is the smallest closed $\rho(\pi_{1}(\Sigma))$-invariant subset in the boundary at infinity of anti-de Sitter space. Since the boundary of the maximal surface is $\rho(\pi_{1}(\Sigma))$-invariant, it must contain $\Lambda_{\rho}$. Because $\rho_{l}$ and $\rho_{r}$ are holonomies of hyperbolic metrics on $\Sigma$ with geodesic boundary, the limit set $\Lambda_{\rho}$ is a Cantor set and we need to describe the remaining part of the boundary at infinity of the maximal surface.  This will be studied in the next subsections by comparing, in a neighbourhood of the punctures, the maximal embedding $\tilde{\sigma}$ and a particular maximal surface, called the horospherical surface (\cite{bon_schl}). 

\subsection{The frame field of a maximal embedding} Let us consider $\R^{4}\subset \C^{4}$ and extend the $\R$-bilinear form of signature $(2,2)$ to the Hermitian product on $\C^{4}$ given by
\[
	\langle z,w\rangle=z_{1}\bar{w}_{1}+z_{2}\bar{w}_{2}-z_{3}\bar{w}_{3}-z_{4}\bar{w}_{4} \ .
\]
Given a maximal conformal embedding $\tilde{\sigma}:\h^{2} \rightarrow \AdS_{3}$, with a slight abuse of notation, we still denote with $\tilde{\sigma}:\h^{2}\rightarrow \widehat{\AdS_{3}} \subset \C^{4}$ one of its lifts. Let $N$ be the unit normal vector field such that $\{\tilde{\sigma}_{w}, \tilde{\sigma}_{\bar{w}}, N, \tilde{\sigma}\}$ is an oriented frame in $\C^{4}$. We define
\[
	q=\langle N_{w}, \tilde{\sigma}_{\bar{w}} \rangle \ .
\]
The embedding being maximal implies that $q$ is a holomorphic quadratic differential on $\h^{2}$. Since the embedding is conformal, we can define a function $\phi:\h^{2} \rightarrow \R$ such that
\[
	\langle \tilde{\sigma}_{w}, \tilde{\sigma}_{w} \rangle= \langle \tilde{\sigma}_{\bar{w}}, \tilde{\sigma}_{\bar{w}} \rangle=e^{2\phi} \ .
\]
These are related to the embedding data of $\tilde{\sigma}$ as follows: the induced metric on $\tilde{\sigma}(\h^{2})$ is $I=2e^{2\phi}|dw|^{2}$ and the second fundamental form is $II=\Re(2q)$. The vectors 
\[
	v_{1}=\frac{\tilde{\sigma}_{w}}{e^{\phi}} \ \ \ v_{2}=\frac{\tilde{\sigma}_{\bar{w}}}{e^{\phi}} \ \ \ N, \ \ \ \text{and} \ \ \ \tilde{\sigma}
\]
give a unitary frame of $(\C^{4}, \langle \cdot, \cdot \rangle)$ at every point $w\in \h^{2}$. Taking the derivatives of the fundamental relations
\[
	\langle N,N\rangle=\langle \tilde{\sigma},\tilde{\sigma} \rangle=-1 \ \ \langle v_{j}, N \rangle=\langle v_{j},\tilde{\sigma}\rangle=0 \ \ \langle N_{z}, \tilde{\sigma}_{\bar{w}} \rangle=q \ \ \langle v_{j},v_{j}\rangle=1
\]
one deduces that
\[
	N_{\bar{w}}=e^{-\phi}\bar{q}v_{1} \ \ \ \overline{\partial}v_{1}=-\phi_{\bar{w}}v_{1}+e^{\phi}\tilde{\sigma} \ \ \ \text{and} \ \ \ \overline{\partial}v_{2}=\phi_{\bar{w}}v_{2}+\bar{q}e^{-\phi}N \ .
\]
Therefore, the pull-back of the Levi-Civita connection $\nabla$ of $(\C^{4}, \langle \cdot, \cdot, \rangle)$ via $\tilde{\sigma}$ can be written in the frame $\{v_{1},v_{2},N,\tilde{\sigma}\}$ as
\begin{equation}\label{eq:framefield}
	\tilde{\sigma}^{*}\nabla=Vd\bar{w}+Udw=
		\begin{pmatrix} -\phi_{\bar{w}} & 0 & e^{-\phi}\bar{q} & 0 \\
				0 & \phi_{\bar{w}} & 0 & e^{\phi} \\ 
				0 & e^{-\phi}\bar{q} & 0 & 0 \\
				e^{\phi} & 0 & 0 & 0
		\end{pmatrix}d\bar{w}+ \begin{pmatrix} \phi_{w} & 0 & 0 & e^{\phi} \\
							0 & -\phi_{w} & qe^{-\phi} & 0 \\
							qe^{-\phi} & 0 & 0 & 0 \\
							0 & e^{\phi} & 0 & 0 
				       \end{pmatrix}dw \ .
\end{equation}
Notice that the flatness of $\tilde{\sigma}^{*}\nabla$ is equivalent to $\phi$ being a solution of the PDE
\[
	\frac{1}{2}\Delta \phi= e^{2\phi}-e^{-2\phi}|q|^{2}
\]
which coincides with Equation (\ref{eq:PDE}) when the background metric is flat. \\

Viceversa, if a holomorphic quadratic differential $q$ and a solution $\phi$ of the above equation are given, the $1$-form $Vd\bar{w}+Udw$ can be integrated to a map $F:\h^{2}\rightarrow \SL(4,\C)$, which is the frame field of a maximal embedding into $\AdS_{3}$ with induced metric $I=2e^{2\phi}|dw|^{2}$ and second fundamental form $II=\Re(2q)$. Moreover, this is unique once the initial conditions are fixed.

\subsection{The horospherical surface}The frame field can be written explicitly in the special case when $q$ is a constant holomorphic quadratic differential, and the associated maximal surface in $\AdS_{3}$ appears in the literature as the horospherical surface (\cite{bon_schl}, \cite{seppimaximal}, \cite{TambuCMC}). See also \cite{Tambu_poly} and \cite{Tambu_regularAdS}.\\

Suppose $q=d\omega^{2}$ is a holomorphic quadratic differential defined on the complex plane $\C$. The corresponding solution to the flatness equation is then clearly $\phi=0$. The $1$-form becomes 
\[
	V_{0}d\bar{\omega}+U_{0}d\omega=\begin{pmatrix}
			0 & 0 & 1 & 0 \\
			0 & 0 & 0 & 1 \\
			0 & 1 & 0 & 0 \\
			1 & 0 & 0 & 0 
			\end{pmatrix} d\bar{\omega}+ \begin{pmatrix}
						0 & 0 & 0 & 1 \\
						0 & 0 & 1 & 0 \\
						1 & 0 & 0 & 0 \\
						0 & 1 & 0 & 0 
						\end{pmatrix}d\omega \ .
\]
The frame field of the horospherical surface is thus 
\[
	F_{0}(\omega)=A_{0}\exp(U_{0}\omega+V_{0}\bar{\omega}) \ ,
\]
for some constant matrix $A_{0} \in \SL(4, \C)$. For our convenience, we choose
\[
    A_{0}=\frac{1}{\sqrt{2}}\begin{pmatrix}
            1 & 1 & 0 & 0 \\
            -i & i & 0 & 0 \\
            0 & 0 & 1 & 1 \\
            0 & 0 & -1 & 1
            \end{pmatrix} \ .
\]
A simple computation shows that the matrix $U_{0}\omega+V_{0}\bar{\omega}$ is diagonalisable by a constant unitary matrix $R$ so that
\[
	R^{-1}(U_{0}\omega+V_{0}\bar{\omega})R=\diag(2\Re(\omega), 2\Imm(\omega), -2\Re(\omega), -2\Imm(\omega)) \ .
\]
Therefore, we can write 
\[
	F_{0}(\omega)=A_{0}R\diag(e^{2\Re(\omega)}, e^{2\Imm(\omega)}, e^{-2\Re(\omega)}, e^{-2\Imm(\omega)})R^{-1} \ . 
\]
The resulting maximal embedding is given by the last column of $F_{0}(\omega)$, that is
\[
    \sigma_{0}=\frac{1}{\sqrt{2}}(\sinh(2\Re(\omega)), \sinh(2\Imm(\omega)), \cosh(2\Re(\omega)), \cosh(2\Imm(\omega))) \ .
\]
In particular, we can describe explicitly the boundary at infinity $\Delta$ of $\sigma_{0}$: it consists of four light-like segments as the following table shows.

\medskip
\begin{table}[!htb]
\begin{center}
\begin{tabular}{l l l}
\hline
\textbf{Direction $\theta$} & \textbf{Projective limit of $\sigma_{0}(te^{i\theta}+iy)$}\\
\hline
$\theta \in (-\tfrac{\pi}{4}, \tfrac{\pi}{4})$ & $v_\theta = [1,0,1,0]$\\
$\theta = \tfrac{\pi}{4}$ & $v_y=[1,s,1,s]$ for some $s(y) \in \R^{+}$\\
$\theta \in (\tfrac{\pi}{4}, \tfrac{3\pi}{4})$ & $v_\theta = [0,1,0,1]$\\
$\theta = \tfrac{3\pi}{4}$ & $v_y=[-s,1,s,1]$ for some $s(y) \in \R^{+}$\\
$\theta\in (\tfrac{3\pi}{4}, \tfrac{5\pi}{4})$  & $v_\theta = [-1,0,1,0]$\\
$\theta = \tfrac{5\pi}{4}$ & $v_y=[-1,-s,1,s]$ for some $s(y) \in \R^{+}$\\
$\theta\in (\tfrac{5\pi}{4}, \tfrac{7\pi}{4})$  & $v_\theta = [0,-1,0,1]$\\
$\theta = \tfrac{7\pi}{4}$ & $v_y=[s,-1,s,1]$ for some $s(y) \in \R^{+}$\\
\hline\\
\end{tabular}
\end{center}
\caption{Limits of the standard horospherical surface along rays}\label{table:1}
\end{table}

\subsection{Comparison with the horospherical surface}\label{subsec:comparison} We saw in Section \ref{subsec:QD} that in a neighbourhood of a pole $p$ of order $n$ we can find $n-2$ standard half-planes $(U_{k}, \omega_{k})$ in which the quadratic differential $q$ pulls-back to $d\omega_{k}^{2}$. Moreover, the estimates in Proposition \ref{prop:existence}, show that very close to the puncture the induced metric on the maximal surface is approximated by the flat metric $|d\omega_{k}|^{2}$. This suggests that the equivariant maximal embedding $\tilde{\sigma}$ restricted to each half-plane should behave asymptotically as the horospherical surface. In order to make this idea precise, we adapt to this Lorentzian context the techiniques developed in \cite{DW}. 

Let $(U, \omega)$ be a standard half-plane. In this discussion we remove the dependence on the index $k$ with the understanding that the argument should be applied to each half-plane. Let $F:U \rightarrow \SL(4,\C)$ be the frame field of the maximal surface $\tilde{\sigma}$ found in Theorem \ref{teo:maxsurface} restricted to $U$. We define the osculating map $G:U\rightarrow \SO_{0}(2,2)$ by 
\[
	G(\omega)=F(\omega)F_{0}^{-1}(\omega)
\]
where $F_{0}:U \rightarrow \SL(4,\C)$ denotes the frame field of the horospherical surface. Notice that the map actually takes value in $\SO_{0}(2,2)$ because both frames $F(\omega)$ and $F_{0}(\omega)$ lie in the same right coset of $\SO_{0}(2,2)$ within $\SL(4,\C)$. Evidently, $G$ is constant if and only if $\tilde{\sigma}$ is itself a horospherical surface. A computation using the structure equation for a maximal surface shows that 
\[
	G^{-1}dG=F_{0}\Theta F_{0}^{-1} \ ,
\]
where 
\[
	\Theta(\omega)=\begin{pmatrix} 
				-v_{\bar{\omega}} & 0 & e^{-v}-1 & 0 \\
				0 & v_{\bar{\omega}} & 0 & e^{v}-1 \\
				0 & e^{-v}-1 & 0 & 0 \\
				e^{v}-1 & 0 & 0 & 0 
				\end{pmatrix}d\bar{\omega} \ + 
\]
\[
				 \ \ \ \ \ \ \ \begin{pmatrix}
				v_{\omega} & 0 & 0 & e^{v}-1 \\
				0 & -v_{\omega} & e^{-v}-1 & 0 \\
				e^{-v}-1 & 0 & 0 & 0 \\
				0 & e^{v}-1 & 0 & 0 
				\end{pmatrix}d\omega
\]
and $I=2e^{2v}|d\omega|^{2}$ is the induced metric on the maximal surface in the $\omega$-coordinate. Notice that the estimates in Proposition \ref{prop:estimates} show that $\Theta(\omega)$ is rapidly decaying to $0$ as $|\omega|$ increases. Ignoring the conjugation by the matrix $F_{0}(\omega)$, this suggests that $G(\omega)$ should converge to a constant as $\omega$ goes to infinity, which would mean that the maximal surface $\tilde{\sigma}(\tilde{\Sigma})$ is asymptotic to a horospherical surface. However, the frame field $F_{0}$ is itself exponentially growing, with a precise rate depending on the direction. Thus the actual asymptotic behaviour of $G$ depends on the comparison between the growth of the error $\Theta(\omega)$ and the frame field $F_{0}(\omega)$. In most directions, the exponential decay of $\Theta(\omega)$ is faster than the growth of $F_{0}(\omega)$, giving a well-defined limiting horospherical surface. In exactly $2$ directions there is an exact balance, which allow the horospherical surface to shift. We start by pointing out the stable directions:

\begin{defi} We say that a ray $\gamma(t)=e^{i\theta}t+iy$ is stable if $\theta \notin \{\pi/4, 3\pi/4\}$.  
\end{defi}

Notice that the possible directions of stable rays in a standard half-plane form three open intervals
\[
J_{+}=(0, \pi/4) \ \ \ \ J_{0}=(\pi/4, 3\pi/4) \ \ \ \ \text{and} \ \ \ \ J_{-}=(3\pi/4, \pi) .
\]
\begin{lemma}\label{lm:stable} If $\gamma$ is a stable ray, then $\lim_{t \to +\infty}G(\gamma(t))$ exists. Furthermore, among all such rays only three limits are achieved: there exist $L_{0}, L_{\pm} \in \SO_{0}(2,2)$ such that
\[
    \lim_{t \to +\infty}G(\gamma(t))=\begin{cases}
                    L_{+} \ \ \ \ \ \ \ \text{if} \ \ \theta \in J_{+} \\
                    L_{0} \ \ \ \ \ \  \ \text{if} \ \ \theta \in J_{0} \\
                    L_{-} \ \ \ \ \ \ \text{if} \ \ \theta \in J_{-}
                    \end{cases}
\]
\end{lemma}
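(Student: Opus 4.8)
The plan is to reduce the existence of the limit to the convergence of a matrix path-ordered exponential, and then to extract that convergence from the decay estimate of Proposition \ref{prop:estimates} after diagonalising the horospherical frame field. Writing $A = G^{-1}dG = F_0\Theta F_0^{-1}$ and $g(t) = G(\gamma(t))$, the curve $g$ solves the linear system $g^{-1}g' = a(t)$, where $a(t) = A_{\gamma(t)}(\gamma'(t))$ takes values in the Lie algebra of $\SO_0(2,2)$; its solution is the path-ordered exponential of $a$, which converges as $t\to+\infty$ as soon as $\int_0^{+\infty}\|a(t)\|\,dt<+\infty$. Thus everything reduces to proving integrability of the conjugated error form along the ray.

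To estimate this, I would use the diagonalisation $F_0 = A_0 R\,D\,R^{-1}$ with $D = \diag(e^{2\Re(\omega)}, e^{2\Imm(\omega)}, e^{-2\Re(\omega)}, e^{-2\Imm(\omega)})$ from the construction of the horospherical surface. Conjugation by the constant factors $A_0 R$ is harmless, while conjugation by $D$ multiplies the $(i,j)$ entry of $\tilde\Theta := R^{-1}\Theta R$ by $d_i/d_j$, whose exponent along $\gamma(t) = e^{i\theta}t + iy$ grows linearly in $t$ with coefficient equal to the difference of the corresponding eigenvalues. On the other hand, Proposition \ref{prop:estimates}, combined with interior elliptic estimates for Equation (\ref{eq:PDEerror}) on balls contained in the union of the standard half-planes (as in the proof of that proposition), gives that the entries $v_\omega$, $v_{\bar\omega}$ and $e^{\pm v}-1$ of $\Theta$, hence all entries of $\tilde\Theta$, are $O\!\left(e^{-2\sqrt{2}|\omega|}/\sqrt{|\omega|}\right)$. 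Since $|\omega| = t + O(1)$ along $\gamma$, the $(i,j)$ entry of $a(t)$ then decays like $e^{(c_{ij}-2\sqrt2)t}$ up to polynomial factors, $c_{ij}$ being its growth coefficient.

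The crucial point is a cancellation. A direct computation in the basis $R$ shows that the entries $(1,3)$ and $(2,4)$ of $\tilde\Theta$, together with their transposes — which carry the largest growth coefficients $\pm4\cos\theta$ and $\pm4\sin\theta$ — vanish identically, in both the $d\omega$ and the $d\bar\omega$ part. The surviving off-diagonal entries pair an eigenvalue $\pm2\Re(\omega)$ with an eigenvalue $\pm2\Imm(\omega)$, so their growth coefficients are $\pm2(\cos\theta\pm\sin\theta)$, of maximal modulus $2(|\cos\theta|+\sin\theta)$; the latter is $\le 2\sqrt2$, with equality precisely when $\theta\in\{\pi/4,3\pi/4\}$. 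Hence for a stable ray every surviving coefficient satisfies $c_{ij}-2\sqrt2<0$, so $\|a(t)\|=O(e^{-\delta t})$ for some $\delta>0$ and the path-ordered exponential converges, proving that $\lim_{t\to+\infty}G(\gamma(t))$ exists. I expect this vanishing to be the main obstacle: it is exactly what rescues the stable directions, since the bare factor $e^{4\cos\theta\,t}$ would otherwise overwhelm the decay, and verifying it requires the explicit form of $R$ and of the two blocks of $\Theta$.

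It remains to show that only three limits occur, which I would do by proving that the limit is constant on each of the open sectors $J_+$, $J_0$, $J_-$. Given two stable rays whose directions lie in the same interval, I would join their far portions by a circular arc $\{|\omega|=\rho\}$ contained in a closed subsector bounded away from $\pi/4$ and $3\pi/4$; there the uniform estimate $c_{ij}\le 2\sqrt2-\delta$ yields $\|A\|=O(e^{-\delta|\omega|})$, so the variation of $G$ along the arc is $O(\rho\,e^{-\delta\rho})\to0$. Comparing $G(\gamma_1(\rho))$ with $G(\gamma_2(\rho))$ through this arc and letting $\rho\to+\infty$ forces the two limits to coincide; exhausting each interval by such subsectors produces the three constants $L_+,L_0,L_-\in\SO_0(2,2)$, one for each sector.
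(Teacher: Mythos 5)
Your proposal is correct and follows essentially the same route as the paper: the same osculating-map ODE reduction, the same diagonalisation $F_0=A_0RDR^{-1}$ with the growth rates $d_i/d_j$ read off entrywise, the same crucial observation that the $(1,3)$, $(2,4)$ entries (and transposes) of $R^{-1}\Theta R$ vanish so that the surviving rates are bounded by $2\sqrt2$ with equality only at $\theta=\pi/4,3\pi/4$, and the same connecting-path argument for constancy on each sector (the paper uses straight chords where you use circular arcs, an inessential difference). If anything you are slightly more explicit than the paper in noting that the bounds on $v_\omega,v_{\bar\omega}$ require interior elliptic estimates on top of Proposition \ref{prop:estimates}.
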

\begin{proof}Let $\gamma(t)=e^{i\theta}t+iy$ be a ray in a stable direction $\theta$. For brevity we denote $G(t)=G(\gamma(t))$. We know that
\[
    G(t)^{-1}G'(t)=F_{0}(\gamma(t))\Theta_{\gamma(t)}(\dot{\gamma}(t))F_{0}^{-1}(\gamma(t)) \ .
\]
Since $F_{0}(w)=A_{0}R\diag(e^{2\Re(w)}, e^{2\Im(w)}, e^{-2\Re(w)}, e^{-2\Im(w)})R^{-1}$, for constant matrices $R$ and $A_{0}$, the asymptotic behaviour of $F_{0}(\gamma(t))\Theta_{\gamma(t)}(\dot{\gamma}(t))F_{0}^{-1}(\gamma(t))$ depends only on the action by conjugation by the diagonal matrix
\[
    D(t)=\diag(e^{2\Re(\gamma(t))}, e^{2\Im(\gamma(t))}, e^{-2\Re(\gamma(t))}, e^{-2\Im(\gamma(t))}) \ .
\]
A direct computation shows that $R^{-1}\Theta R$ is equal to 
\[ 
    \frac{e^{-i\theta}}{4}\scalebox{0.75}{$\begin{pmatrix}
    2(e^{v}+e^{-v}-2) & -2v_{\bar{\omega}}+(1-i)(e^v-e^{-v}) & 0 & -2v_{\bar{\omega}}+(1+i)(e^v-e^{-v}) \\
    -2v_{\bar{\omega}}-(1-i)(e^v-e^{-v}) & 2i(e^v+e^{-v}-2) & -2v_{\bar{\omega}}+(1+i)(e^v-e^{-v}) & 0 \\
    0 & -2v_{\bar{\omega}}-(1+i)(e^v-e^{-v}) & -2(e^{v}+e^{-v}-2) & -2v_{\bar{\omega}}-(1-i)(e^v-e^{-v})\\
    -2v_{\bar{\omega}}-(1+i)(e^v-e^{-v}) & 0 & -2v_{\bar{\omega}}+(1-i)(e^v-e^{-v}) & -2i(e^{v}+e^{-v}-2)
    \end{pmatrix}dt$} + 
\]
\[
     \frac{e^{i\theta}}{4}\scalebox{0.75}{$\begin{pmatrix}
    2(e^{v}+e^{-v}-2) & 2v_{\omega}-(1+i)(e^v-e^{-v}) & 0 & 2v_{\omega}-(1-i)(e^v-e^{-v}) \\
    2v_{\omega}+(1+i)(e^v-e^{-v}) & 2i(e^v+e^{-v}-2) & 2v_{\omega}-(1-i)(e^v-e^{-v}) & 0 \\
    0 & 2v_{\omega}+(1-i)(e^v-e^{-v}) & -2(e^{v}+e^{-v}-2) & 2v_{\omega}+(1+i)(e^v-e^{-v})\\
    2v_{\omega}+(1-i)(e^v-e^{-v}) & 0 & 2v_{\omega}-(1+i)(e^v-e^{-v}) & -2i(e^{v}+e^{-v}-2)
    \end{pmatrix}dt$}
\]
and conjugating by $D(t)$ multiplies the $(i,j)$-entry by
\[
    \lambda_{ij}=\exp\left(2t \left(\cos\left(\theta+\frac{(i-1)\pi}{2}\right)+\cos\left(\theta+\frac{(j-1)\pi}{2}\right)\right)\right)=O\left(e^{c(\theta)t}\right) \ , 
\]
where $c(\theta)$ achieves its maximum $2\sqrt{2}$ at $\theta=\pm\pi/4$ (here we have considered only the pairs $(i,j)$ so that $\lambda_{ij}$ multiplies a non-zero entry). Combining the bounds for $R^{-1}\Theta R$ and $\lambda_{ij}$, we find that for every stable ray, 
\[
    G(t)^{-1}G'(t)=O\left(\frac{e^{-\beta t}}{\sqrt{t}}\right)
\]
where $\beta=2\sqrt{2}-c(\theta)>0$. It is then standard to show that the limit $\lim_{t \to +\infty}G(t)$ exists (\cite[Lemma B.1]{DW}). \\
\indent Now suppose that $\gamma_{1}$ and $\gamma_{2}$ are stable rays with respective angles $\theta_{1}$ and $\theta_{2}$ that belong to the same interval. For any $t\geq 0$, let $\eta_{t}(s)=(1-s)\gamma_{1}(t)+s\gamma_{2}(t)$ be the constant-speed parameterisation of the segment from $\gamma_{1}(t)$ to $\gamma_{2}(t)$. Let $g_{t}(s)=G(\eta_{t}(0))^{-1}G(\eta_{t}(s))$, which satisfies
\[
    \begin{cases} g_{t}^{-1}(s)g'_{t}(s)=F_{0}(\eta_{t}(s))\Theta_{\eta_{t}(s)}(\dot{\eta}_{t}(s))F_{0}^{-1}(\eta_{t}(s)) \\
                    g_{t}(0)=Id \\
                    g_{t}(1)=G_{1}(t)^{-1}G_{2}(t)
    \end{cases} \ ,
\]
where $G_{i}(t)=G(\gamma_{i}(t))$ for $i=1,2$. Since $|\dot{\eta}_{t}(s)|=O(t)$, the analysis above shows that 
\[
    g_{t}^{-1}(s)g_{t}'(s)=O\left(\sqrt{t}e^{-\beta t}\right) \ ,
\] 
where $\beta=2\sqrt{2}-\sup\{ c(\theta) \ | \ \theta_{1} \leq \theta \leq \theta_{2} \}$. In particular, by making $t$ large we can arrange $g_{t}^{-1}(s)g_{t}'(s)$ to be uniformly small for all $s \in [0,1]$. ODE methods (\cite[Lemma B.1]{DW}) ensure that
\[
    g_{t}(1)=G_{1}^{-1}(t)G_{2}(t) \to Id \ \ \ \text{as} \ \ t \to +\infty \ .
\]
This shows that $G$ has the same limit along $\gamma_{1}$ and $\gamma_{2}$. \\
\end{proof}

Next we analyse the behaviour across unstable rays in order to understand the relationship between $L_{\pm}$ and $L_{0}$.

\begin{lemma}\label{lm:unipotent} Let $L_{\pm}$ and $L_{0}$ be as in the previous lemma. Then there exist unipotent matrices $U_{\pm}$ such that
\[
    L_{+}^{-1}L_{0}=A_{0}RU_{+}R^{-1}A_{0}^{-1} \ \ \ \ \ \text{and} \ \ \ \ \ L_{0}^{-1}L_{-}=A_{0}RU_{-}R^{-1}A_{0}^{-1} \ .
\]
\end{lemma}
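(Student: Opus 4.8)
The plan is to exhibit each $U_\bullet$ as the parallel transport of the flat connection $G^{-1}dG = F_0\Theta F_0^{-1}$ across an unstable direction, and to show that this transport lands in a fixed abelian group of square-zero unipotents. Writing $F_0(\omega) = A_0 R D(\omega)R^{-1}$ with $D(\omega) = \diag(e^{2\Re(\omega)}, e^{2\Imm(\omega)}, e^{-2\Re(\omega)}, e^{-2\Imm(\omega)})$ and setting $\tilde G = R^{-1}A_0^{-1}G A_0 R$, one computes $\tilde G^{-1}d\tilde G = \Xi$, where $\Xi := D(R^{-1}\Theta R)D^{-1}$. The two identities to be proven are then equivalent to the statements that $\tilde L_+^{-1}\tilde L_0$ and $\tilde L_0^{-1}\tilde L_-$ are unipotent, where $\tilde L_\bullet = R^{-1}A_0^{-1}L_\bullet A_0 R$. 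Fixing $\theta_1 \in J_+$ and $\theta_2 \in J_0$ and integrating $\Xi$ along the circular arc $\theta \mapsto \mathcal R e^{i\theta}$, $\theta_1 \le \theta \le \theta_2$, Lemma \ref{lm:stable} yields $\tilde G \to \tilde L_+$ and $\tilde G \to \tilde L_0$ at the two ends, so that
\[
    \tilde L_+^{-1}\tilde L_0 = \lim_{\mathcal R \to +\infty}\mathrm{P}\exp\int_{\theta_1}^{\theta_2}\Xi \ .
\]

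Next I would determine which entries of $\Xi$ survive in the limit. The $(i,j)$-entry of $\Xi$ equals $(D_{ii}/D_{jj})\,(R^{-1}\Theta R)_{ij}$, and Proposition \ref{prop:estimates} shows that $(R^{-1}\Theta R)_{ij}$ is of order $e^{-2\sqrt2|\omega|}$ on the off-diagonal positions (where the entries are linear in $v$ and its derivatives) and of order $e^{-4\sqrt2|\omega|}$ on the diagonal (where they are quadratic in $v$). On the arc $|\omega| = \mathcal R$ one has $D_{ii}/D_{jj} = \exp(2(\pm\Re(\omega) \pm \Imm(\omega)))$, whose exponent attains its maximal value $2\sqrt2\,\mathcal R$ at $\theta = \pi/4$ precisely on the positions with $i \in \{1,2\}$ and $j \in \{3,4\}$. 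Among these, the off-diagonal and non-vanishing positions are exactly $(1,4)$ and $(2,3)$: the companion positions $(1,3),(2,4)$ are identically zero in $\Theta$, the reciprocal positions $(4,1),(3,2)$ carry an exponentially small factor $D_{ii}/D_{jj}$, and the dominant diagonal entries are killed by their faster $e^{-4\sqrt2\mathcal R}$ decay. A Laplace estimate centred at $\theta = \pi/4$, taking into account the arc-length factor $\mathcal R\,d\theta$, then shows that the entries $(1,4)$ and $(2,3)$ integrate to finite constants while every other entry integrates to $0$ as $\mathcal R \to +\infty$. Letting $E_{ij}$ denote the matrix units, the two surviving positions span $N_+ = \Span(E_{14}, E_{23})$, which is abelian and satisfies $X^2 = 0$ for all $X \in N_+$; hence the path-ordered exponential collapses to $\exp$ of its commuting $N_+$-part, giving $\tilde L_+^{-1}\tilde L_0 = \Id + c_1 E_{14} + c_2 E_{23}$, a unipotent matrix. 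The identical analysis at $\theta = 3\pi/4$ singles out the non-vanishing positions $(2,1),(3,4)$, whose span $N_- = \Span(E_{21}, E_{34})$ is again abelian with square zero, yielding the unipotent $U_-$.

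The main obstacle is to make rigorous the passage from the full, non-abelian path-ordered exponential to the exponential of its nilpotent part. The subtlety is that the higher iterated integrals could, a priori, couple the surviving entries to the decaying ones through commutators. I would control this exactly as in the proof of Lemma \ref{lm:stable}, invoking the ODE estimates of \cite[Lemma B.1]{DW}: the total variation along the arc of the off-$N_\pm$ part of $\Xi$ tends to $0$, so every iterated integral containing at least one such factor vanishes in the limit, and the limiting transport is governed solely by the abelian part, where the series terminates after the linear term. A secondary technical point is that the natural rays of a standard half-plane are the vertical translates $e^{i\theta}t + iy$ rather than genuine radii; since the bounded shift $iy$ changes $D_{ii}/D_{jj}$ only by a bounded multiplicative constant, it affects neither the concentration analysis nor the identification of the surviving subalgebra, so the estimate may be carried out on the translated arcs without altering the conclusion.
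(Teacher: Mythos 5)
Your proposal is correct and follows essentially the same route as the paper: joining a ray in $J_{+}$ (resp.\ $J_{0}$) to one in $J_{0}$ (resp.\ $J_{-}$) by a circular arc, isolating the entries $(1,4),(2,3)$ (resp.\ $(2,1),(3,4)$) as the only ones where the conjugation by $D$ exactly balances the $e^{-2\sqrt{2}|\omega|}$ decay of $\Theta$ near $\theta=\pi/4$ (resp.\ $3\pi/4$), and using the Gaussian concentration together with the ODE estimates of \cite{DW} to reduce the path-ordered exponential to the exponential of the abelian square-zero part. The only differences are presentational (you conjugate by $A_{0}R$ at the outset and take generic stable directions rather than the paper's $\pi/6$ and $\pi/2$), so no further comment is needed.
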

\begin{proof} We give the detailed proof for $L_{+}^{-1}L_{0}$, for the other case we only underline the differences at the end. Consider the rays $\gamma_{+}(t)=e^{i\pi/6}t$ and $\gamma_{0}(t)=it$. By the previous lemma $G_{+}(t)=G(\gamma_{+}(t))$ and $G_{0}(t)=G(\gamma_{0}(t))$ have limit $L_{+}$ and $L_{0}$, respectively. For any $t>0$, we join $\gamma_{+}(t)$ and $\gamma_{0}(t)$ by an arc
\[
    \eta_{t}(s)=e^{is}t \ , \ \ \ \text{where} \ s \in [\pi/6, \pi/2] \ .
\]
Let $g_{t}(s)=G(\eta_{t}(\pi/6))^{-1}G(\eta_{t}(s))$. Then $g_{t}:[\pi/6, \pi/2]\rightarrow \SO_{0}(2,2)$ satisfies the differential equation
\begin{equation}\label{eq:ODE}
    \begin{cases} g_{t}^{-1}(s)g'_{t}(s)=F_{0}(\eta_{t}(s))\Theta_{\eta_{t}(s)}(\dot{\eta}_{t}(s))F_{0}^{-1}(\eta_{t}(s)) \\
                    g_{t}(\pi/6)=Id \\
                    g_{t}(0)=G_{+}(t)^{-1}G_{0}(t)
    \end{cases} \ .
\end{equation}  
Unlike the previous case, the coefficient  
\[
    M_{t}(s)=D(\eta_{t}(s))R^{-1}\Theta_{\eta_{t}(s)}(\dot{\eta}_{t}(s))RD(\eta_{t}(s))^{-1} 
\]
is not exponentially small in $t$ throughout the interval. At $s=\pi/4$, conjugation by $D(\eta_{t}(\pi/4))$ multiplies the $(1,4)$-entry and the $(2,3)$-entry by a factor $\exp(2\sqrt{2}t)$, exactly matching the decay rate of $R^{-1}\Theta R$ and giving
\[
    M_{t}(\pi/4)=O\left(\frac{|\dot{\eta}_{t}(\pi/4)|}{\sqrt{t}}\right)=O(\sqrt{t})
\]
because $|\dot{\eta}_{t}(\pi/4)|=t$. However, this growth is seen only in the $(1,4)$-entry and in the $(2,3)$-entry because all the others are scaled by a smaller exponential factor. Moreover, for $\theta \in [\pi/6, \pi/2]$ we have
\[
    \lambda_{14}=\lambda_{23}=\exp(2t(\cos\theta+\sin\theta))\leq \exp\left(2\sqrt{2}t-\left(\theta-\frac{\pi}{4}\right)^{2}t\right) \ ,
\]
thus we can separate the unbounded entry in $M_{t}(s)$ and write
\[
    M_{t}(s)=M_{t}^{0}(s)+\mu_{t}(s)(E_{14}+E_{23})
\]
where $M_{t}^{0}(s)=O(e^{-\beta t})$ for some $\beta>0$, $E_{14}$ and $E_{23}$ are the elementary matrices, and
\[
    \mu_{t}(s)=O\left(|\dot{\eta}_{t}(s)|\exp(-2\sqrt{2}t)\lambda_{14}\right)=O\left(\sqrt{t}e^{-(\theta-\pi/4)^{2}t}\right) \ .
\]
This upper-bound is a Gaussian function centered at $\theta=\pi/4$, normalised such that its integral is independent of $t$. Therefore, the function $\mu_{t}(s)$ is uniformly absolutely integrable over $s \in [\pi/6, \pi/2]$ as $t \to +\infty$. Now, under this condition the solution to the initial value problem (\ref{eq:ODE}) satisfies (\cite[Lemma B.2]{DW})
\[
    \left\| g_{t}(\pi/2)-A_{0}R\exp\left((E_{14}+E_{23})\int_{\frac{\pi}{6}}^{\frac{\pi}{2}}\mu_{t}(s)\right)R^{-1}A_{0}^{-1}\right\| \to 0 \ \ \text{as} \ \ t \to +\infty \ .
\]
Since $g_{t}(0)=G(\gamma_{+}(t))^{-1}G(\gamma_{0}(t)) \to L_{+}^{-1}L_{0}$, this gives the desired unipotent form. \\
The proof for $L_{0}^{-1}L_{-}$ follows the same line with the only difference given by the fact that at $\theta=3\pi/4$, the leading term in the matrix $M_{t}(s)$ lies in the $(2,1)$- and $(3,4)$-entry.
\end{proof}

\subsection{Light-like polygonal ends} We are now going to use the above estimates in order to describe the boundary at infinity $\Gamma$ of the $\rho$-equivariant maximal surface $\tilde{\sigma}(\tilde{\Sigma})$. We already know that $\Gamma$ contains the limit set of the representation $\rho=(\rho_{l}, \rho_{r})$ which is a Cantor set consisting of the pairs of attracting fixed points of the holonomy. On the other hand, $\Gamma$ must be a locally achronal topological circle. By equivariance, what remains to be understood is how the points $x^{++}(\rho(\gamma))$ and $x^{--}(\rho(\gamma))$ are connected, for every peripheral element $\gamma \in \pi_{1}(\Sigma)$. We will prove the following.

\begin{teo} \label{teo:ends} Let $p$ be a puncture and suppose that the quadratic differential $q$ has a pole of order $n\geq 3$ at $p$. Let $\gamma \in \pi_{1}(\Sigma)$ be a peripheral element around $p$. Then the points $x^{++}(\rho(\gamma))$ and $x^{--}(\rho(\gamma))$ are connected by a $\rho(\gamma)$-equivariant achronal light-like polygonal end with $2(n-2)$ fundamental vertices and accumulation points $x^{++}(\rho(\gamma))$ and $x^{--}(\rho(\gamma))$. 
\end{teo}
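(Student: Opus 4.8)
The plan is to transfer the asymptotic analysis of the osculating map $G$ from Lemmas~\ref{lm:stable} and~\ref{lm:unipotent} directly into a description of the boundary at infinity $\Gamma$, working on one standard half-plane at a time and then gluing the pieces $\rho(\gamma)$-equivariantly. Since $\tilde\sigma=G\cdot\sigma_{0}$, where $\sigma_{0}$ is the horospherical embedding, the boundary value of $\tilde\sigma$ along a ray is obtained by applying the limiting value of $G$ along that ray to the corresponding boundary value of $\sigma_{0}$ recorded in Table~\ref{table:1}.

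First I would analyse a single standard half-plane $(U,\omega)$. By Lemma~\ref{lm:stable}, along every stable ray $G$ converges to one of $L_{+},L_{0},L_{-}$ according to whether $\theta$ lies in $J_{+}$, $J_{0}$ or $J_{-}$; combined with Table~\ref{table:1} this shows that the boundary of $\tilde\sigma|_{U}$ meets exactly three vertices, $L_{+}\cdot[1,0,1,0]$, $L_{0}\cdot[0,1,0,1]$ and $L_{-}\cdot[-1,0,1,0]$. The two unstable directions $\theta=\pi/4$ and $\theta=3\pi/4$ produce the two connecting arcs. Across $\theta=\pi/4$, Lemma~\ref{lm:unipotent} shows that the relevant one-sided limits of $G$ differ by the unipotent factor $A_{0}RU_{+}R^{-1}A_{0}^{-1}$, whose only nontrivial slots are $(1,4)$ and $(2,3)$; one checks, using the Segre identification of $\partial_{\infty}\AdS_{3}$, that this is a translation along a light-like ruling and that the arc joining $L_{+}\cdot[1,0,1,0]$ to $L_{0}\cdot[0,1,0,1]$ is a light-like segment lying in one of the two foliations. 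Symmetrically, across $\theta=3\pi/4$ the exceptional slots are $(2,1)$ and $(3,4)$, giving a light-like segment in the other foliation. Thus the boundary over one half-plane is a chain of two light-like segments belonging to the two foliations alternately.

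Next I would glue. Consecutive half-planes $U_{k}$ and $U_{k+1}$ meet along a boundary half-ray, and along this shared ray the two one-sided limits of the single map $\tilde\sigma$ coincide; this identifies the $J_{-}$-vertex of $U_{k}$ with the $J_{+}$-vertex of $U_{k+1}$, so the chains match at their endpoints and assemble into a connected polygonal curve. Running over the $n-2$ half-planes that tile a punctured neighbourhood of $p$ and passing to the universal cover, where the peripheral element $\gamma$ shifts the cyclic family of half-planes by one full turn, i.e. by $n-2$ half-planes, yields a $\rho(\gamma)$-equivariant bi-infinite polygonal curve. Each half-plane contributes one interior vertex (from $J_{0}$) and one vertex shared with its neighbour, so a fundamental domain for $\langle\rho(\gamma)\rangle$ contains exactly $2(n-2)$ vertices and $2(n-2)$ light-like edges, with the two foliations alternating. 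Achronality is inherited from the general fact recalled in Section~\ref{sec:maxAdS} that the boundary of a complete space-like maximal surface is locally achronal; in particular each edge, being light-like with endpoints on $\Gamma$, is entirely contained in $\Gamma$. Finally, since $\gamma$ is hyperbolic and $\rho(\gamma)^{n}\cdot x\to x^{++}(\rho(\gamma))$ as $n\to+\infty$ (and to $x^{--}(\rho(\gamma))$ as $n\to-\infty$), the vertices accumulate exactly onto $x^{++}(\rho(\gamma))$ and $x^{--}(\rho(\gamma))$, which are therefore the two accumulation points of the polygonal end.

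The hardest step will be upgrading the ray-by-ray convergence of Lemmas~\ref{lm:stable}--\ref{lm:unipotent} to genuine control of the boundary at infinity, namely convergence of $\tilde\sigma(\omega_{n})$ along arbitrary sequences $\omega_{n}\to\infty$ in a half-plane rather than only along rays. This is precisely where the Dumas--Wolf estimates (\cite{DW}) must be used at full strength: one needs the limit $L_{\bullet}$ to be attained uniformly for $\theta$ in compact subintervals of each $J_{\bullet}$, and the Gaussian-concentration bound on $\mu_{t}$ near the unstable directions to guarantee both that no spurious accumulation points appear and that each arc is a nondegenerate segment, so that the count $2(n-2)$ is exact and not merely an upper bound. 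A secondary, purely computational, subtlety is the verification that the exceptional slots $(1,4),(2,3)$ and $(2,1),(3,4)$ correspond respectively to the left- and right-foliation, which pins down the alternation pattern.
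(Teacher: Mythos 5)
Your proposal follows essentially the same route as the paper: limits of the osculating map along stable rays (Lemma \ref{lm:stable}) applied to the table of horospherical boundary values, unipotent jumps across the two unstable directions (Lemma \ref{lm:unipotent}) producing the light-like edges of a ``vee'' in each half-plane, gluing of consecutive half-planes at a shared vertex, and $\rho(\gamma)$-equivariance via the homotopy $\beta_{k,\theta,m}\simeq \beta_{k,\theta,0}\cdot\gamma^{m}$. The one point you assert without justification --- that the one-sided limits along the boundary ray shared by $U_{k}$ and $U_{k+1}$ coincide, which Lemma \ref{lm:stable} does not directly cover since $\theta=0,\pi$ are endpoints of the stable intervals --- is handled in the paper by introducing an auxiliary standard half-plane $W$ overlapping $U_{k}$ and $U_{k+1}$ in sectors of angle $\pi/2$, so that the shared direction becomes an interior stable direction of $W$ and the lemma applies there.
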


Let us first explain the terminology.  A light-like polygonal end $\mathcal{LP}$ in the boundary at infinity of $\AdS_{3}$ is a concatenation of light-like segments $\{ \ell_{i}\}_{i \in \Z}$. This can be constructed by alternating segments belonging to the right- and left- foliation of $\partial_{\infty}\AdS_{3}$. We say that the polygonal end is $\rho(\gamma)$-equivariant if there exists an integer $k>0$ such that $\rho(\gamma)\ell_{i}=\ell_{i+k}$ for every $i \in \Z$. In this case, we can reconstruct the polygonal end by knowing a finite number of vertices, that we called fundamental, as they can be obtained by considering the vertices contained in a fundamental domain of the action of $\rho(\gamma)$ in $\partial_{\infty}\AdS_{3}$. The accumulation points are the limits
\[
	\mathrm{Accum}(\mathcal{LP})=\lim_{i \to +\infty} \ell_{i} \cup \lim_{i \to -\infty} \ell_{i} \ .
\]
Clearly, if $\mathcal{LP}$ is equivariant with respect to the action of $\rho(\gamma)$, the accumulation points always coincide with the attracting and repelling fixed points of $\rho(\gamma)$. \\

The vertices of the polygonal end will arise as limits of the equivariant maximal embedding $\tilde{\sigma}$ along lifts of paths on $\Sigma$ directed towards the puncture. The limit will depend on the homotopy class of the paths and we will pin down special representatives in each homotopy class in order to be able to apply the estimates in Section \ref{subsec:comparison}. We are going to consider paths $\beta$ starting from a fixed base point in $\Sigma$ and converging to the puncture $p$ which in a natural coordinate are rays. This allows us to talk about the direction $\theta$ in which $\beta$ is approaching the puncture. Notice that paths in different homotopy classes can converge to the puncture along the same direction in the same half-plane, as they may differ by a complete rotation along the puncture. A way to construct such paths is the following: let $\{(U_{k}, \omega_{k})\}_{k=1}^{n-2}$ be the collection of standard half-planes that cyclically cover a neighbourhood of the puncture. A path $\beta_{k,\theta}$ converging to the puncture in the half-plane $U_{k}$ with direction $\theta$ can be obtained by concatenating a path $\alpha_{k}:[0,1]\rightarrow \Sigma$ that connects the fixed base point on $\Sigma$ and the boundary of $U_{k}$ with the ray $\gamma_{\theta}(t)=e^{i\theta}t+\alpha_{k}(1)$. Morover, we can choose the paths $\alpha_{k}$ so that $\alpha_{1}$ is contractible and $\alpha_{k}$ is obtained from $\alpha_{1}$ by following the boundary of the halfplanes (see Figure \ref{fig:paths}). 

\begin{figure}[h!]
\begin{center}
\includegraphics[scale=.3]{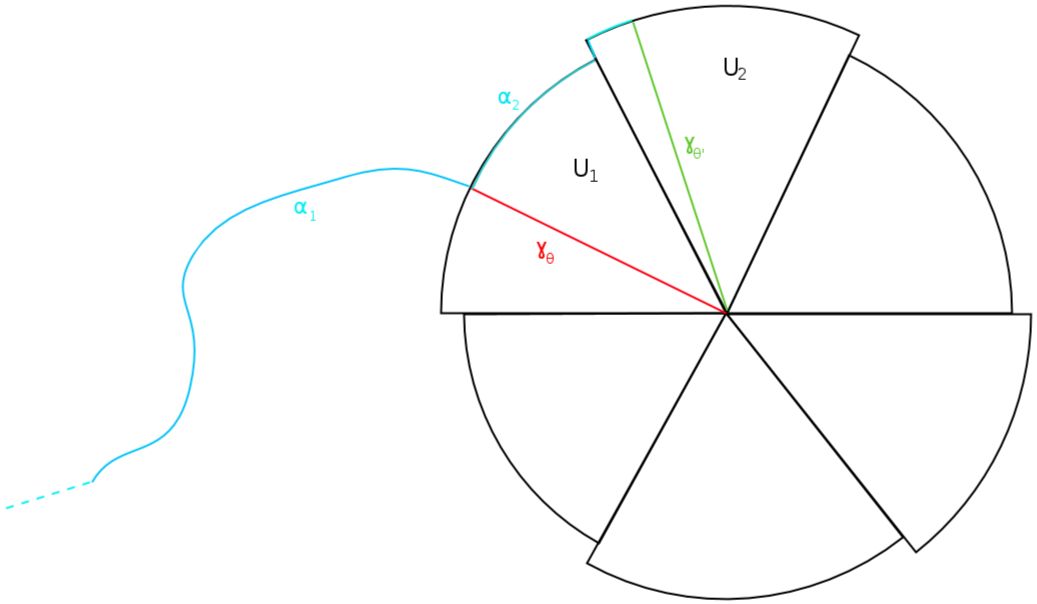}
\caption{Definition of the paths $\beta_{k,\theta,m}$}\label{fig:paths}
\end{center}
\end{figure}

In this way, when $m \in \Z$ rotations are completed around the puncture we get paths in different homotopy classes approaching the puncture in the same direction $\theta$ and in the same half-plane $U_{k}$ as $\beta_{k, \theta}$. We will denote such paths by $\beta_{k, \theta, m}$ with the convention that $\beta_{k, \theta, 0}=\beta_{k,\theta}$. Each homotopy class of paths converging to the puncture has then a representative of the form $\beta_{k,\theta,m}\cdot \alpha$ for some $\alpha \in \pi_{1}(\overline{\Sigma})$.

\begin{proof}[Proof of Theorem \ref{teo:ends}] Let us consider the family of paths $\beta_{k, \theta, m}$ converging to the puncture $p$ as above. By Lemma \ref{lm:stable}, the limit $L^{m}_{k, \theta}=\lim_{t \to + \infty}G(\beta_{k,\theta,m}(t))$ exists as long as $\theta$ is a stable direction, and only depends on the interval $J_{\pm}$ or $J_{0}$ in which $\theta$ lies. We will thus denote the limiting matrix as $L^{m}_{k, \epsilon}$ where $\epsilon=0,\pm$, keeping track only on such interval. Since the frame field of the maximal embedding is $F(\beta_{k,\theta,m}(t))=G(\beta_{k,\theta,m}(t))F_{0}(\beta_{k, \theta, m}(t))$, the limiting point along the path can be expressed as $\nu^{m}_{k, \epsilon}=L^{m}_{k, \epsilon}v_{\theta}$, where $v_{\theta}$ is the point at infinity of the standard horospherical surface along a ray with direction $\theta$ (see Table \ref{table:1}). We deduce that in each standard half-plane $U_{k}$, for a fixed $m \in \Z$, we see three points at infinity as the following table shows:

\medskip
\begin{table}[!htb]
\begin{center}
\begin{tabular}{l l l}
\hline
\textbf{Direction $\theta$} & \textbf{Projective limit of $\ell^{m}_{k, \epsilon}$ of $\tilde{\sigma}$}\\
\hline
$\theta \in (0, \tfrac{\pi}{4})$ & $\nu^{m}_{k, +} = L^{m}_{k,+}[1,0,1,0]$\\
$\theta \in (\tfrac{\pi}{4}, \tfrac{3\pi}{4})$ & $\nu^{m}_{k, 0} = L^{m}_{k,0}[0,1,0,1]$\\
$\theta\in (\tfrac{3\pi}{4}, \pi)$  & $\nu^{m}_{k, -} = L^{m}_{k,-}[-1,0,1,0]$\\
\hline\\
\end{tabular}
\end{center}
\caption{Limits along rays in a half-plane}\label{table: 2}
\end{table}

\noindent A direct computation, using the formulas provided by Lemma \ref{lm:unipotent}, shows that
\[
	L^{m}_{k,0}[-1,0,1,0]=L^{m}_{k,-}[-1,0,1,0] \ \ \ \text{and} \ \ \ L^{m}_{k,0}[1,0,1,0]=L^{m}_{k,+}[1,0,1,0] \ .
\]
In particular, the three limit points that appear in each half-plane are causally related, being them the image under an element of $\SO_{0}(2,2)$ of causally related points, and the light-like segment joining them is entirely contained in the boundary at infinity. Therefore, for $m=0$ in each standard hall-plane $U_{k}$, we see a "vee" in the boundary at infinity of the maximal surface given by 
\[
	L^{0}_{k,0}([1,0,1,0]\cup [1,s,1,s] \cup [0,1,0,1] \cup [-s, 1, s, 1] \cup [-1,0,1,0])
\]
with $s$ varying in $\R^{+}$. Given two consecutive half-planes $U_{k}$ and $U_{k+1}$ the two "vees" share an extreme vertex: in fact, by considering another standard half-plane $W$ that intersects $U_{k}$ and $U_{k+1}$ in a sector of angle $\pi/2$, the arguments in Lemma \ref{lm:stable} show that the direction $\pi$ is stable, so the ending point of the "vee" in $U_{k}$ coincides with the first point of the "vee" in $U_{k+1}$. Notice that when $k=n-2$ this procedure makes the index $m$ of the path increase by one. The above discussion thus produces a collection of vertices $\{\nu_{i}^{m}\}_{i=1}^{2(n-2)}$ in the boundary at infinity of the maximal surface that arise as limits along the paths $\beta_{k,\theta,m}$ with $\nu^{m}_{2(n-2)}=\nu^{m+1}_{1}$, such that two consecutive vertices are connected by light-like segments that belong to the left- and right-foliation alternately. Moreover, since for every $m\in \Z$ the path $\beta_{k,\theta,m}$ is homotopic to $\beta_{k,\theta, 0}\cdot \gamma^{m}$, where $\gamma$ is the peripheral element that goes once around the puncture, we have that, if
\[
	\nu_{i}^{0}=\lim_{t \to \infty}\tilde{\sigma}(\beta_{k,\theta,0}(t))
\]
then
\begin{align*}
	\nu_{i}^{m}=\lim_{t \to \infty}\tilde{\sigma}(\beta_{k,\theta,m}(t))=\lim_{t \to \infty}\tilde{\sigma}((\beta_{k, \theta, 0}\cdot \gamma^{m})(t))=\rho(\gamma)^{m}\nu_{i}^{0}
\end{align*}
hence the polygonal end is $\rho(\gamma)$-equivariant and there are $2(n-2)$ fundamental vertices. 
\end{proof}

\section{Parameterisation of wild anti-de Sitter structures}\label{sec:para}
From the results of the previous sections, we can construct a globally hyperbolic anti-de Sitter structure from the data of a complete hyperbolic metric $h$ of finite area on $\Sigma$ and a meromorphic quadratic differential $q$ with poles of order at least $3$ at the punctures. Namely, Theorem \ref{teo:maxsurface} provides a unique complete equivariant maximal embedding into $\AdS_{3}$ whose boundary at infinity is an achronal curve $\Gamma(h,q)$ that contains the limit set of the holonomy completed to a topological circle by inserting light-like polygonal ends for each peripheral element. Let $\Omega(h,q)$ be the domain of dependence of this boundary curve. The holonomy representation acts properly discontinuously on $\Omega(h,q)$ (\cite{MR2369412},\cite{MR2443264}) and the quotient is a globally hyperbolic maximal anti-de Sitter manifold $M(h,q)$ diffeomorphic to $\Sigma\times\R$. On the other hand, for a fixed pair of discrete and faithful representations $\rho_{l,r}:\pi_{1}(\Sigma)\rightarrow \PSL(2,\R)$ with hyperbolic peripheral elements, the space of GHM anti-de Sitter structures $\mathcal{GH}(\Sigma)$ on $\Sigma\times \R$ is quite large: if $\Lambda_{\rho}$ is the limit set of the action of $\rho=(\rho_{l},\rho_{r})$, then there is a one-to-one correspondence between elements of $\mathcal{GH}(\Sigma)$ and $\rho(\pi_{1}(\Sigma))$-equivariant completions of $\Lambda_{\rho}$ to an achronal topological circle (\cite{bsk_multiblack}). The aim of this section is thus to characterise the image of the map 
\begin{align*}
	\Psi: \mathcal{MQ}_{\geq 3}(\Sigma) &\rightarrow \mathcal{GH}(\Sigma) \\
				(h,q) &\mapsto M(h,q)
\end{align*}
associating to a point $(h,q) \in \mathcal{MQ}_{\geq 3}(\Sigma)$ in the bundle over Teichm\"uller space of meromorphic quadratic differentials with poles of order at least $3$ at the punctures the corresponding GHM anti-de Sitter structure. 

\begin{prop}\label{prop:injective} The map $\Psi$ is injective.
\end{prop}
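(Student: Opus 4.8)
The plan is to show that the data $(h,q)$ can be reconstructed intrinsically from the globally hyperbolic anti-de Sitter manifold $M(h,q)$, so that two distinct points in $\mathcal{MQ}_{\geq 3}(\Sigma)$ cannot produce isometric structures. The key observation is that $M(h,q)$ contains a distinguished object, namely the maximal surface $\tilde{\sigma}(\tilde{\Sigma})$ constructed in Theorem \ref{teo:maxsurface}, which descends to a complete maximal surface $S$ embedded in $M(h,q)$. The entire argument reduces to proving that this maximal surface is canonically determined by the anti-de Sitter structure alone, independently of the parameterisation.

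First I would argue that if $M(h_1,q_1)$ and $M(h_2,q_2)$ are isometric as globally hyperbolic maximal anti-de Sitter manifolds, then the isometry lifts to an isometry of the universal covers intertwining the two holonomy representations. Because the embedded maximal surface with principal curvatures in $(-1,1)$ is unique inside a GHM anti-de Sitter manifold (this is the completeness and uniqueness statement underlying Theorem \ref{teo:maxsurface}, together with the Krasnov--Schlenker philosophy recalled in Section \ref{subsec:GHMC}), the isometry must carry the maximal surface $S_1$ of $M(h_1,q_1)$ onto the maximal surface $S_2$ of $M(h_2,q_2)$. Consequently the isometry identifies the induced metrics $I_1$ and $I_2$ and the second fundamental forms $II_1 = \Re(2q_1)$ and $II_2 = \Re(2q_2)$. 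The conformal class of $I_i$ is exactly the conformal structure underlying $h_i$, and by uniqueness of the complete hyperbolic metric in a conformal class of finite type this recovers $h_i$; the holomorphic quadratic differential $q_i$ is then recovered as the second fundamental form via the relation $\Re(q) = I(B\cdot,\cdot)$ spelled out in Section \ref{sec:maxAdS}. Thus $(h_1,q_1)=(h_2,q_2)$ up to the action of the mapping class group already accounted for in defining $\mathcal{MQ}_{\geq 3}(\Sigma)$.

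The step I expect to be the main obstacle is the existence and uniqueness of the embedded maximal surface inside a \emph{wild} GHM anti-de Sitter manifold. In the closed case this follows from Barbot--B\'eguin--Zeghib's foliation by constant mean curvature surfaces; here the surface is non-compact and complete with respect to a metric $I \geq 2|q|$ that degenerates towards the light-like polygonal ends at the punctures. I would therefore need to verify that the maximal surface constructed in Theorem \ref{teo:maxsurface} is the \emph{only} complete maximal surface with principal curvatures in $(-1,1)$ whose boundary at infinity is the prescribed curve $\Gamma(h,q)$, and that no competing maximal surface arises from a different choice of embedding data sharing the same domain of dependence. This is where the uniqueness half of Proposition \ref{prop:existence} combined with the boundary analysis of Section \ref{sec:boundary} does the work: the boundary curve $\Gamma$ together with the requirement that the surface be asymptotic to it (in the sense controlled by the comparison with the horospherical surface) pins down the solution $v$ of the vortex equation uniquely, hence the surface, hence the pair $(h,q)$.
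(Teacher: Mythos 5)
Your overall architecture matches the paper's: reduce injectivity to the statement that the maximal surface is determined by the structure $\Psi(h,q)$, then recover $(h,q)$ from the embedding data $I$ and $II=\Re(2q)$ of that surface. The paper takes a slightly more direct route than your ``isometric manifolds'' framing: since a point of $\mathcal{GH}(\Sigma)$ is encoded by the pair (holonomy, boundary curve $\Gamma$ at infinity), one only needs that a given locally achronal curve $\Gamma\subset\partial_{\infty}\AdS_{3}$ bounds at most one complete maximal surface, which the paper imports from \cite[Lemma 4.2]{Tambu_poly}; there is no need to discuss uniqueness of maximal surfaces inside the ambient manifold in the style of the CMC-foliation argument.

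The genuine gap is in how you propose to fill the key step. You correctly identify that everything hinges on uniqueness of the complete maximal surface with prescribed boundary at infinity, but you then claim this follows from ``the uniqueness half of Proposition \ref{prop:existence}'', i.e.\ uniqueness of the solution $v$ to the vortex equation. That uniqueness is relative to a \emph{fixed} pair $(h,q)$: it shows that $\Psi$ is well defined, not that it is injective. For injectivity you must rule out that two \emph{different} pairs $(h_{1},q_{1})\neq(h_{2},q_{2})$ — which give two different PDEs on two different conformal backgrounds, each with its own unique solution — produce maximal surfaces sharing the same boundary curve $\Gamma$. Nothing in the vortex-equation uniqueness, nor in the asymptotic comparison with the horospherical surface, addresses this: the argument as written is circular, since it presupposes the conformal structure and differential it is trying to recover. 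The missing ingredient is a genuinely geometric uniqueness statement for complete maximal surfaces spanning a given achronal curve (proved in \cite{Tambu_poly} by a maximum-principle comparison of two such surfaces); once that is in hand, the rest of your reconstruction of $(h,q)$ from $I$ and $II$ goes through exactly as in the paper.
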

\begin{proof} Suppose by contradiction that $\Psi$ is not injective. Then we can find $(h,q)\neq (h',q') \in \mathcal{MQ}_{\geq 3}(\Sigma)$ such that $\Psi(h,q)=\Psi(h',q')$. By definition, this means that the equivariant maximal embeddings associated to $(h,q)$ and $(h',q')$ have the same holonomy representation and the same boundary at infinity. On the other hand, the arguments of \cite[Lemma 4.2]{Tambu_poly} show that given an achronal curve $\Gamma \subset \partial_{\infty}\AdS_{3}$ the maximal surface bounding $\Gamma$ is unique. This gives a contradiction because the pair $(h,q)$ is uniquely determined by the embedding data of the maximal surface.
\end{proof}

\begin{prop} The map $\Psi$ is continuous.
\end{prop}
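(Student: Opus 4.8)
The plan is to unwind the two pieces of data that determine a point of $\mathcal{GH}(\Sigma)$ --- the holonomy $\rho=(\rho_{l},\rho_{r})$ and the $\rho(\pi_{1}(\Sigma))$-equivariant achronal completion $\Gamma(h,q)$ of the limit set --- and to check that each depends continuously on $(h,q)$. Recall that $\mathcal{GH}(\Sigma)$ carries the topology for which two structures are close when their holonomies are close in $\Hom(\pi_{1}(\Sigma), \dPSL)$ and their boundary curves are close in the Hausdorff topology on $\partial_{\infty}\AdS_{3}\cong S^{1}\times S^{1}$. Since $\Omega(h,q)$ and the quotient $M(h,q)$ are determined by this pair, it suffices to prove that $(h,q)\mapsto \rho(h,q)$ and $(h,q)\mapsto \Gamma(h,q)$ are continuous.

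The first step is the continuous dependence of the maximal embedding on $(h,q)$, which reduces to the continuous dependence of the solution $v$ of Equation (\ref{eq:PDE}). I would argue sequentially: given $(h_{n},q_{n})\to (h,q)$, the barriers constructed in the proof of Proposition \ref{prop:existence} can be chosen uniformly on a neighbourhood of $(h,q)$, so the solutions $v_{n}$ are uniformly bounded; standard interior elliptic estimates then give $C^{\infty}_{loc}$ bounds, and any subsequential limit solves the limiting equation and is complete, hence equals $v$ by the uniqueness in Theorem \ref{teo:maxsurface}. As every subsequence admits a further subsequence converging to $v$, the whole sequence converges, giving continuity of $v$ in the $C^{\infty}_{loc}$ topology. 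Feeding this into the frame field equation (\ref{eq:framefield}), the connection $1$-form $U\,dw+V\,d\bar{w}$ varies continuously, and therefore so does the monodromy of the associated flat $\SO_{0}(2,2)$-connection, i.e. the holonomy $\rho$.

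For the boundary curve I would treat its two parts separately, the Cantor limit set and the polygonal ends. The set of attracting fixed points $x^{++}(\rho(\gamma))$ varies continuously because, for each fixed $\gamma$, the fixed points of the hyperbolic elements $\rho_{l}(\gamma)$ and $\rho_{r}(\gamma)$ depend continuously on $\rho$; equicontinuity of these fixed-point maps along a generating set then yields Hausdorff continuity of $\Lambda_{\rho}$. For the light-like polygonal ends of Theorem \ref{teo:ends}, the vertices are $\nu^{m}_{k,\epsilon}=L^{m}_{k,\epsilon}v_{\theta}$, so it is enough to show that the limiting osculating matrices $L^{m}_{k,\epsilon}$ and the unipotents $U_{\pm}$ of Lemmas \ref{lm:stable} and \ref{lm:unipotent} depend continuously on $(h,q)$. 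This should follow from the integral representation of the osculating map $G$ together with the bounds on $G^{-1}G'$ and on $\mu_{t}$, once these are made uniform over a neighbourhood in $\mathcal{MQ}_{\geq 3}(\Sigma)$.

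The main obstacle is precisely this uniformity near the punctures. The asymptotic estimate of Proposition \ref{prop:estimates} must be upgraded so that the constants are locally uniform in $(h,q)$, ensuring that the exponential decay of $\Theta$ and the Gaussian bound on $\mu_{t}$ control the osculating integral uniformly along stable and unstable rays alike. Only with this uniform control can one commute the limits defining $L^{m}_{k,\epsilon}$ with the limit in $(h,q)$ and conclude Hausdorff continuity of $\Gamma(h,q)$ including the polygonal ends. Once both $\rho$ and $\Gamma$ are shown to be continuous, the continuity of $\Psi$ follows.
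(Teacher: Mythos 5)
Your treatment of the PDE part coincides with the paper's: barriers from Proposition \ref{prop:existence} give uniform bounds, elliptic estimates give $C^{\infty}_{loc}$ subconvergence, and uniqueness of the complete solution identifies the limit, so the embedding data and hence the equivariant embeddings converge smoothly on compact sets. Your route to the holonomy (continuity of the monodromy of the flat connection) is a legitimate substitute for the compactness lemma the paper invokes. The divergence is in how you handle the boundary curve, and this is where there is a genuine gap. You decompose $\Gamma(h,q)$ into the limit set and the polygonal ends and propose to track the vertices $\nu^{m}_{k,\epsilon}=L^{m}_{k,\epsilon}v_{\theta}$ through the osculating-map analysis; but this requires upgrading every estimate of Section \ref{sec:boundary} (Proposition \ref{prop:estimates}, Lemmas \ref{lm:stable} and \ref{lm:unipotent}) to be locally uniform in $(h,q)$, which you explicitly identify as ``the main obstacle'' and then do not carry out. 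As written, the argument does not close: you cannot commute the $t\to\infty$ limit defining $L^{m}_{k,\epsilon}$ with the limit in $(h,q)$ without that uniformity, and even granting continuity of each individual vertex you would still need uniform control of the infinitely many segments accumulating at $x^{\pm\pm}(\rho(\gamma))$ (and, for the Cantor part, uniformity over the whole limit set rather than continuity of fixed points of a generating set) to conclude Hausdorff convergence of $\Gamma$.

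The paper avoids all of this. Since each complete maximal surface is the graph of a $2$-Lipschitz map in the $D\times S^{1}$ model (Section \ref{sec:maxAdS}), these graphs form an equicontinuous family extending continuously to $\overline{D}$; locally smooth convergence of the embeddings then upgrades automatically to uniform convergence up to the boundary, and Hausdorff convergence of the boundary circles $\Gamma(h_{n},q_{n})\to\Gamma(h,q)$ follows with no asymptotic analysis at the punctures whatsoever. If you want to salvage your approach you must either prove the locally uniform version of Proposition \ref{prop:estimates} (uniform choice of the constants in the Bessel-function barrier and in the decay of $\Theta$ over a neighbourhood in $\mathcal{MQ}_{\geq 3}(\Sigma)$), or replace the vertex-by-vertex argument with the global Lipschitz-graph compactness just described.
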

\begin{proof}We endow $\mathcal{GH}(\Sigma)$ with the topology induced by the one-to-one correspondence between elements of $\mathcal{GH}(\Sigma)$ and pairs $(\rho, \Gamma_{\rho})$, where $\rho=(\rho_{l}, \rho_{r})$ is pair of discrete and faithful representations into $\PSL(2,\R)$ with hyperbolic peripheral elements and $\Gamma_{\rho}$ is an achronal completion of the limit set of $\rho$ to a topological circle. We thus consider on $\mathcal{GH}(\Sigma)$ the topology induced by the product of the usual topology in the space of representations and the Hausdorff topology for compact sets in $\partial_{\infty}\AdS_{3}$. \\
Let $(h_{n},q_{n})\in \mathcal{MQ}_{\geq 3}(\Sigma)$ be a sequence converging to $(h,q)\in\mathcal{MQ}_{\geq 3}(\Sigma)$. We need to prove that the holonomy representation of $M(h_{n},q_{n})$ converges to the holonomy representation of $M(h,q)$ and the boundary curves $\Gamma(h_{n},q_{n})$ converge to $\Gamma(h,q)$ in the Hausdorff topology. Let $v_{n}$ and $v$ be the solution to Equation (\ref{eq:PDE}) associated to the data $(h_{n},q_{n})$ and $(h,q)$ respectively. On every compact set $K\subset \Sigma$, the supersolution and subsolution found in Proposition \ref{prop:existence} provide a uniform bound for $\Delta_{h_{n}}v_{n}$. Since $h_{n}$ is a convergent sequence, standard theory for elliptic PDE gives a uniform $W^{1,2}$ bound for $v_{n}$. Thus $v_{n}$ subconverges to a weak solution of the equation 
\[
	\frac{1}{2}\Delta_{v}h=e^{2v}-e^{-2v}\|q\|^{2}_{h}+\frac{1}{2}K_{h} \ ,
\]
in $W^{1,2}$ on every compact set. By elliptic regularity $v$ is smooth and the convergence is actually smooth. We deduce that the embedding data of the unique maximal surface in $M(h_{n},q_{n})$ converges smoothly on compact sets to the embedding data of the unique maximal surface in $M(h,q)$. By lifting to the universal cover, this implies that the corresponding equivariant maximal embeddings $\tilde{\sigma_{n}}: \tilde{\Sigma}\rightarrow \AdS_{3}$ are converging smoothly on compact sets (up to post-composition by a global isometry) to $\tilde{\sigma}:\tilde{\Sigma}\rightarrow \AdS_{3}$, and thus the boundary at infinity $\Gamma(h_{n},q_{n})$ converges to $\Gamma(h,q)$ in the Hausdorff topology. The convergence of the holonomy follows from the general fact below, which was proved in \cite[Lemma 5.3]{Tambu_regularAdS}.
\end{proof}

\begin{lemma} Let $\tilde{\sigma}:\tilde{\Sigma}\rightarrow \AdS_{3}$ be a sequence of $\rho_{n}$-equivariant space-like embeddings. If $\tilde{\sigma}_{n}$ converges to a space-like embedding $\tilde{\sigma}$ smoothly on compact sets, then $\rho_{n}$ converges, up to subsequences, to a representation $\rho$ and $\tilde{\sigma}$ is $\rho$-equivariant.
\end{lemma}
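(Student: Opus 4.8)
The lemma asserts that convergence of equivariant space-like embeddings $\tilde{\sigma}_n \to \tilde{\sigma}$ smoothly on compact sets forces (subsequential) convergence of the holonomy representations $\rho_n \to \rho$ and equivariance of the limit. The plan is to extract the holonomy from the embeddings pointwise and pass to the limit in the equivariance relation, using properness of the relevant group actions to upgrade the pointwise limit to a genuine representation.

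First I would fix a point $x_0 \in \tilde{\Sigma}$ and a generating set $\{\gamma_1, \dots, \gamma_r\}$ for $\pi_1(\Sigma)$. The equivariance relations $\tilde{\sigma}_n(\gamma_j \cdot x) = \rho_n(\gamma_j)\tilde{\sigma}_n(x)$ hold for all $x$ and all $n$. Since $\tilde{\sigma}_n$ converges smoothly on compact sets, I would evaluate this relation at $x_0$ and at enough nearby points (or, more robustly, on the embedding together with its first-order frame data) to determine $\rho_n(\gamma_j)$ as an element of $\SO_0(2,2)$ from the action on a fixed compact configuration. Concretely, the values $\tilde{\sigma}_n(x_0)$, $\tilde{\sigma}_n(\gamma_j x_0)$, together with the induced frames (tangent vectors and normal), lie in a compact set because they converge, and a space-like embedding determines its isometry on such frame data. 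This shows $\rho_n(\gamma_j)$ lies in a compact region of $\SO_0(2,2)$, so up to a subsequence $\rho_n(\gamma_j) \to A_j \in \SO_0(2,2)$ for each generator.

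Next I would check that the assignment $\gamma_j \mapsto A_j$ extends to a homomorphism $\rho:\pi_1(\Sigma)\to \SO_0(2,2)$. Because each $\rho_n$ is a homomorphism and matrix multiplication is continuous, any relation $w(\gamma_1,\dots,\gamma_r)=1$ in $\pi_1(\Sigma)$ satisfied by the generators passes to the limit: $w(A_1,\dots,A_r)=\lim_n w(\rho_n(\gamma_1),\dots,\rho_n(\gamma_r)) = \lim_n \rho_n(w) = \Id$. Hence $\rho$ is a well-defined representation. Finally, equivariance of $\tilde{\sigma}$ follows by passing to the limit in $\tilde{\sigma}_n(\gamma\cdot x)=\rho_n(\gamma)\tilde{\sigma}_n(x)$: for fixed $\gamma$ and fixed $x$, both sides lie in compact sets and converge to $\tilde{\sigma}(\gamma\cdot x)$ and $\rho(\gamma)\tilde{\sigma}(x)$ respectively, giving $\tilde{\sigma}(\gamma\cdot x)=\rho(\gamma)\tilde{\sigma}(x)$ for all $x\in\tilde\Sigma$ and all $\gamma\in\pi_1(\Sigma)$.

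The step I expect to be the main obstacle is the first one: showing that $\rho_n(\gamma_j)$ stays in a compact subset of $\SO_0(2,2)$, i.e. that the sequence cannot escape to infinity in the isometry group. This requires that the embeddings $\tilde\sigma_n$ together with their associated frames (not merely the point values) converge on a compact set large enough to pin down an isometry uniquely, and that the limiting frame is non-degenerate (the space-like condition guarantees the tangent plane is non-degenerate, so the frame $\{\tilde\sigma_n, (\tilde\sigma_n)_x, (\tilde\sigma_n)_y, N_n\}$ is a genuine basis with uniformly controlled non-degeneracy). Since an isometry of $\AdS_3$ is determined by its action on such a frame at a single point, and this frame data converges to a non-degenerate limit, the maps $\rho_n(\gamma_j)$ are forced into a compact set; this is precisely the content of \cite[Lemma 5.3]{Tambu_regularAdS}, which I would invoke to conclude.
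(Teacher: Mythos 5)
Your argument is correct and is essentially the standard proof of this statement; the paper itself gives no proof here, deferring entirely to \cite[Lemma 5.3]{Tambu_regularAdS}, and your reconstruction (pin down $\rho_n(\gamma_j)$ as the unique linear map of $\R^4$ sending the frame $\{\tilde\sigma_n,(\tilde\sigma_n)_x,(\tilde\sigma_n)_y,N_n\}$ at $x_0$ to the corresponding frame at $\gamma_j x_0$, use $C^\infty_{\mathrm{loc}}$ convergence and non-degeneracy of the limit frame to get precompactness, then pass to the limit in the relations and in the equivariance identity) is exactly what that lemma's proof amounts to. The only cosmetic remark is that your closing sentence invokes the cited lemma to conclude, which is circular since it is the statement being proved, but your own argument is already self-contained and does not need it.
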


We define the subset of \emph{wild} GHM anti-de Sitter structures on $\Sigma\times \R$ as the image of the map $\Psi$:
\[
	\mathcal{GH}^{wild}(\Sigma)=\Psi(\mathcal{MQ}_{\geq 3}(\Sigma))\subset \mathcal{GH}(\Sigma) \ .
\]

From Section \ref{sec:boundary} we know that the curves at infinity $\Gamma(h,q)$ are always obtained by completing the limit set of the holonomy with light-like polygonal ends, but at the moment we do not know if any representation $\rho=(\rho_{l}, \rho_{r})$ is attained and if any light-like polygonal end can be realised. Let us denote with $\mathcal{GH}^{\mathcal{LP}}(\Sigma)$ the space of globally hyperbolic anti-de Sitter manifolds (up to diffeomorphisms isotopic to the identity) with holonomy $\rho$ given by a pair of faithful and discrete representations with hyperbolic peripheral elements and boundary at infinity given by a light-like polygonal completion of the limit set of $\rho$. Hence $\mathcal{GH}^{wild}(\Sigma)\subset \mathcal{GH}^{\mathcal{LP}}(\Sigma)$.

\begin{prop}The map $\Psi: \mathcal{MQ}_{\geq 3}(\Sigma) \rightarrow \mathcal{GH}^{\mathcal{LP}}(\Sigma)$ is proper.
\end{prop}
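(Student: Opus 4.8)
The plan is to prove properness sequentially: I will show that if a sequence $(h_{n}, q_{n}) \in \mathcal{MQ}_{\geq 3}(\Sigma)$ is such that the images $M(h_{n}, q_{n}) = \Psi(h_{n}, q_{n})$ remain in a compact subset of $\mathcal{GH}^{\mathcal{LP}}(\Sigma)$, then $(h_{n}, q_{n})$ admits a subsequence converging in $\mathcal{MQ}_{\geq 3}(\Sigma)$. By the description of the topology on $\mathcal{GH}^{\mathcal{LP}}(\Sigma)$, compactness of the images provides, after passing to a subsequence, holonomy representations $\rho_{n} = (\rho_{l,n}, \rho_{r,n})$ converging to a pair $\rho = (\rho_{l}, \rho_{r})$ of discrete and faithful representations with hyperbolic peripheral elements, together with boundary curves $\Gamma_{n} = \Gamma(h_{n}, q_{n})$ converging in the Hausdorff topology to a light-like polygonal completion $\Gamma$ of the limit set $\Lambda_{\rho}$.

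The first key step is to promote this boundary convergence to convergence of the maximal surfaces themselves. Identifying $\widehat{\AdS}_{3}$ with $D \times S^{1}$, each lifted maximal surface $\tilde{\sigma}_{n}(\tilde{\Sigma})$ is the graph of a $2$-Lipschitz map (\cite[Proposition 3.1]{Tambu_poly}), so the family is equi-Lipschitz and, by Arzel\`a--Ascoli, subconverges uniformly on compact sets to an achronal graph whose boundary at infinity is $\Gamma$. Elliptic regularity for the maximal surface equation upgrades this to smooth convergence on compact subsets of $\tilde{\Sigma}$, and the limit is a complete equivariant maximal surface bounding $\Gamma$; by the uniqueness of the maximal surface spanning a fixed achronal curve (\cite[Lemma 4.2]{Tambu_poly}) it is independent of the subsequence. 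Reading off the embedding data then yields smooth convergence on compact sets of the induced metrics and second fundamental forms, hence a candidate limit $(h_{\infty}, q_{\infty})$ for $(h_{n}, q_{n})$ on the interior of $\Sigma$.

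It remains to show that $(h_{\infty}, q_{\infty})$ is a genuine element of $\mathcal{MQ}_{\geq 3}(\Sigma)$ and that the convergence takes place in its topology; this is where the wild structure must be controlled. First, the number $2(n_{i}-2)$ of fundamental vertices of the polygonal end of $\Gamma_{n}$ at each puncture is a discrete invariant, so the convergence $\Gamma_{n} \to \Gamma$ to a light-like polygonal completion forces the pole orders $n_{i}$ to stabilize along the subsequence and to be preserved in the limit. Second, via the Gauss map the pair $(h_{n}, q_{n})$ determines two crowned hyperbolic surfaces $(\Sigma, h_{l,n})$ and $(\Sigma, h_{r,n})$ with holonomies $\rho_{l,n}, \rho_{r,n}$: the convergence $\rho_{n} \to \rho$ controls their bulk Fenchel--Nielsen parameters, while the convergence of the polygonal ends of $\Gamma_{n}$ controls their crown parameters, so both are confined to a compact region of the relevant Teichm\"uller spaces of crowned hyperbolic surfaces. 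Applying Gupta's correspondence (\cite[Theorem 1.2]{Gupta}), which realizes $(X_{n}, q_{n})$ as the harmonic-map data of these crowned surfaces, this compactness transfers to a compactness statement for $(X_{n}, q_{n})$; in particular the conformal structure $X_{n} = [h_{n}]$ cannot degenerate in $\mathcal{T}(\Sigma)$ and the leading coefficients of $q_{n}$ remain bounded away from $0$ and $\infty$, so that the pole orders do not drop. Combined with the interior convergence of the previous step, this identifies $(h_{\infty}, q_{\infty})$ as an element of $\mathcal{MQ}_{\geq 3}(\Sigma)$ and yields $(h_{n}, q_{n}) \to (h_{\infty}, q_{\infty})$ in its topology.

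I expect the main obstacle to lie in this last stage: the interior, smooth-on-compacta convergence of the maximal surfaces is not by itself enough, and one must rule out any loss of control near the punctures, both a possible neck-pinch degenerating the conformal class $h_{n}$ and a possible escape of the leading data of the quadratic differential. The asymptotic comparison with the horospherical surface developed in Section~\ref{subsec:comparison}, together with the exponential decay estimate of Proposition~\ref{prop:estimates}, supplies the uniform cusp-neighbourhood control needed to match each limiting polygonal end with a quadratic differential of the correct pole order; quantifying this matching, and confirming that the crowned-surface coordinates vary continuously through it, is the technical heart of the argument.
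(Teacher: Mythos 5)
Your argument follows the paper's own proof essentially step for step: Hausdorff convergence of the boundary curves yields smooth convergence on compact sets of the maximal surfaces (the paper cites \cite[Section 4.1]{Tambu_poly} for exactly the compactness-plus-uniqueness argument you reconstruct via the $2$-Lipschitz graph property and Arzel\`a--Ascoli), hence convergence of the embedding data, with the pole orders recovered from the number of fundamental vertices of the polygonal ends. The extra control near the punctures via the crowned surfaces and Gupta's correspondence, which you flag as the technical heart, is not carried out in the paper --- it simply asserts that convergence of the embedding data gives $h_{n}\to h$ in $\mathcal{T}(\Sigma)$ and $q_{n}\to q$ --- so your proposal is, if anything, more cautious than the published argument on the one point where it is genuinely terse.
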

\begin{proof} Let $(\rho_{n},\Gamma_{n})=\Psi(h_{n},q_{n})$ be a sequence of globally hyperbolic maximal anti-de Sitter structures with holonomy $\rho_{n}$ and boundary at infinity $\Gamma_{n}$ that lie in the image of the map $\Psi$. Suppose that $\rho_{n}$ and $\Gamma_{n}$ converge to $\rho$ and $\Gamma$ in $\mathcal{GH}^{\mathcal{LP}}(\Sigma)$. In particular, the boundaries at infinity $\Gamma_{n}$ of the maximal surfaces $S_{n}$ embedded in these manifolds converge to $\Gamma$ in the Hausdorff topology. The arguments of \cite[Section 4.1]{Tambu_poly} show that the sequence $S_{n}$ converges to a maximal surface $S$ bounding $\Gamma$ smoothly on compact sets. In particular, the embedding data of $S_{n}$ converge to the embedding data of $S$. We deduce that $h_{n}$ converges to $h$ in $\mathcal{T}(\Sigma)$ and $q_{n}$ converges to a meromorphic quadratic differential $q$. Moreover, $q$ has poles of order at least $3$ because the order of the poles determines the number of fundamental vertices in the light-like polygonal completion of the boundary at infinity of the maximal surface.
\end{proof}

Therefore, the map $\Psi$ is a homeomorphism onto its image, and we are going to prove that $\mathcal{GH}^{wild}(\Sigma)=\mathcal{GH}^{\mathcal{LP}}(\Sigma)$, by showing that for every $n_{1}, \dots, n_{N}\geq 3$ the subset $\mathcal{GH}^{\mathcal{LP}}(\Sigma, 2n_{1}-4, \dots, 2n_{N}-4) \subset \mathcal{GH}^{\mathcal{LP}}(\Sigma)$ consisting of light-like polygonal completions with $2n_{i}-4$ fundamental vertices is a manifold of the same dimension as the subbundle $\mathcal{MQ}(\Sigma, n_{1}, \dots, n_{N})$ of meromorphic quadratic differentials with poles of order exactly $n_{i}$. 

\subsection{Parameterisation of $\mathcal{GH}^{\mathcal{LP}}(\Sigma, 2n_{1}-4, \dots, 2n_{N}-4)$} A curve $\Gamma$ on the boundary at infinity of $\AdS_{3}$ can be seen as a graph of a function $f_{\Gamma}:\R\Pp^{1}\rightarrow \R\Pp^{1}$ in the following way. Fix a totally geodesic plane $P_{0}$ in $\AdS_{3}$. Its boundary at infinity is a circle in $\partial_{\infty}\AdS_{3}$. Any point $\xi\in \partial_{\infty}\AdS_{3}$ lies in a unique line belonging to the right-foliation and a unique line belonging to the left-foliation of $\partial_{\infty}\AdS_{3}$. Those two lines intersect the circle at infinity of $P_{0}$ in exactly one point that we denote by $\Pi_{l}(\xi)$ and $\Pi_{r}(\xi)$ respectively. We can thus associate to a curve $\Gamma$ the map $f_{\Gamma}:\R\Pp^{1}\rightarrow \R\Pp^{1}$ defined by the property that
\[
	f_{\Gamma}(\Pi_{l}(\xi))=\Pi_{r}(\xi) \ \ \ \ \text{for every $\xi \in \Gamma$.}
\]
This procedure gives a well-defined map, as soon as $\Gamma$ is an acausal curve (\cite{bon_schl}). However, if $\Gamma$ is a light-like polygonal completion of the limit set $\Lambda_{\rho}$ of a representation $\rho=(\rho_{l},\rho_{r}):\pi_{1}(\Sigma)\rightarrow \dPSL$, we can make this construction work and associate a unique function $f_{\Gamma}:\R\Pp^{1}\rightarrow \R\Pp^{1}$ as follows. By definition, the projections $\Lambda_{l}=\Pi_{l}(\Lambda_{\rho})$ and $\Lambda_{r}=\Pi_{r}(\Lambda_{\rho})$ are the limit sets of the representations $\rho_{l}$ and $\rho_{r}$ acting on $P_{0}$, which is isometric to the hyperbolic plane. We can set $f_{\Gamma}:\Lambda_{l}\rightarrow \Lambda_{r}$ as the unique map such that 
\[
	f_{\Gamma}(\rho_{l}(\gamma))=\rho_{r}(\gamma)\circ f_{\Gamma} \ \ \ \ \ \text{for every $\gamma \in \Gamma$.}
\]
Notice that, in particular, $f_{\Gamma}$ sends the attracting (resp. repelling) fixed points of $\rho_{l}$ to the attracting (resp. repelling) fixed points of $\rho_{r}$. We then want to extend this function to the whole $\R\Pp^{1}$. Consider a connected component $C$ of $\R\Pp^{1}\setminus \Lambda_{l}$: this corresponds to a lift of an end of the hyperbolic surface $\Sigma_{l}=\h^{2}/\rho_{l}(\pi_{1}(\Sigma))$. Let $\gamma \in \pi_{1}(\Sigma)$ be the associated peripheral element and let $c$ be the lift of $\gamma$ so that $C$ is a connected component of $\R\Pp^{1}\setminus \{c^{\pm}\}$ where $c^{\pm}$ are the end points of $c$. Similarly $c'^{\pm}=f_{\Gamma}(c^{\pm})$ are the ideal points of a lift $c'$ of a geodesic boundary of the hyperbolic surface $\Sigma_{r}=\h^{2}/\rho_{r}(\pi_{1}(\Sigma))$ and the oriented arc between $c'^{-}$ and $c'^{+}$ is a connected component of $\R\Pp^{1}\setminus \Lambda_{r}$. The pairs of points $(c^{+}, c'^{+})$ and $(c^{-}, c'^{-})$ belong to the limit set $\Lambda_{\rho}$ and are connected in $\Gamma$ by a $\rho(\gamma)$-equivariant light-like polygonal end $\{\ell_{i}\}_{i \in \Z}$. Without loss of generality we can assume that the segments $\ell_{2i}$ with even indices belong to the left-foliation and the segments $\ell_{2i+1}$ of odd indices lie in the right-foliation. The projections $\Pi_{l}(\ell_{2i+1})$ give a collection of arcs with end points $\vartheta_{i}=\Pi_{l}(\ell_{2i})$ limiting to $c^{-}$ when $i\to -\infty$ and to $c^{+}$ for $i\to +\infty$. Let us denote by $\vartheta'_{i}$ the projections $\Pi_{r}(\ell_{2i+1})$. We can extend the function $f_{\Gamma}$ to the connected component $C$ in such a way that $\vartheta_{i}$ are the points of discontinuity of $f_{\Gamma}$ and the following relations hold
\begin{align*}
	f_{\Gamma}(\mathrm{int}(\Pi_{l}(\ell_{2i+1})))&=\vartheta_{i}'  \\
	f_{\Gamma}(\vartheta_{i})&=\vartheta_{i}' \ \ \ \ \ \ \ \text{for all $i\in \Z$.}
\end{align*}
This determines the function $f_{\Gamma}$ in all connected components $\rho_{r}(\pi_{1}(\Sigma))C$ by equivariance, and repeating the same construction for all geodesic boundaries of $\Sigma_{l}$ we obtain the desired function $f_{\Gamma}$ whose graph is represented by the curve $\Gamma$. Notice that the function $f_{\Gamma}$ is uniquely determined by the representations $\rho_{l}$ and $\rho_{r}$, and by labelled collections of points $\{\vartheta_{i}\}_{i\in \Z}$ and $\{\vartheta_{i}'\}_{i\in \Z}$ in $\R\Pp^{1}$ for each geodesic boundary of the hyperbolic surfaces $\Sigma_{l}$ and $\Sigma_{r}$. 
Since this data can be obtained from the universal cover of two crowned hyperbolic surfaces, we get the following:

\begin{teo}\label{thm:parameterisation} The set $\mathcal{GH}^{\mathcal{LP}}(\Sigma, 2n_{1}-4, \dots, 2n_{N}-4)$ is parameterised by 
\[
	\left(\mathcal{T}(S, n_{1}-2, \dots, n_{N}-2)\times \mathcal{T}(S, n_{1}-2, \dots, n_{N}-2)\right)/\Z^{N}
\]
where $\Z^{N}$ acts diagonally by relabelling the lifts of the boundary cusps in the universal cover of each crown.
\end{teo}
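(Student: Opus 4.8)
The plan is to build the parameterising map in the direction (pair of crowned surfaces) $\mapsto$ (GHM structure) and show that it descends to a homeomorphism on the quotient. Given a pair of crowned hyperbolic surfaces $X_{l}, X_{r}$ with crowns of $n_{i}-2$ boundary cusps at the $i$-th boundary, I would read off from their developing maps the holonomies $\rho_{l}, \rho_{r}:\pi_{1}(\Sigma)\rightarrow \PSL(2,\R)$ --- which have hyperbolic peripheral elements, the crowns being attached along closed geodesics --- together with, for each peripheral class $\gamma_{i}$, the $\Z$-families of ideal endpoints $\{\vartheta_{j}\}_{j\in\Z}$ and $\{\vartheta_{j}'\}_{j\in\Z}$ of the bi-infinite geodesics forming the two crown ends. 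Feeding exactly this data into the construction of $f_{\Gamma}$ carried out above produces a $\rho$-equivariant light-like polygonal completion $\Gamma$ of $\Lambda_{\rho}$, hence a point of $\mathcal{GH}^{\mathcal{LP}}(\Sigma, 2n_{1}-4, \dots, 2n_{N}-4)$; the number of fundamental vertices at $p_{i}$ is $2(n_{i}-2)$, since the $n_{i}-2$ cusps of the crown interlace with $n_{i}-2$ opposite-foliation segments.

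Conversely, the analysis preceding the theorem shows that every element of $\mathcal{GH}^{\mathcal{LP}}(\Sigma, \dots)$ is recovered from $\rho_{l}, \rho_{r}$ and the cyclically ordered configurations $\{\vartheta_{j}\}, \{\vartheta_{j}'\}$, and that these determine $f_{\Gamma}$ uniquely. I would then match this against the coordinates on crowned Teichm\"uller space: the holonomy $\rho_{l}$ determines the compact core with geodesic boundary and hence its Fenchel--Nielsen data, while the boundary twist and the relative cusp distances are precisely the positions of the points $\{\vartheta_{j}\}$ relative to the axis of $\rho_{l}(\gamma_{i})$. This makes the assignment (crowned surface with a labelling of cusps) $\mapsto (\rho_{l}, \{\vartheta_{j}\})$ a continuous bijection, and symmetrically on the right factor.

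The crucial point is that the reconstruction of $\Gamma$ uses only the cyclically ordered sets $\{\vartheta_{j}\}, \{\vartheta_{j}'\}$, and never the choice of a first lift. A unit shift $\vartheta_{j}\mapsto \vartheta_{j+1}$ of that labelling generates an action of $\Z$ for each puncture, whose $(n_{i}-2)$-th power equals $\rho(\gamma_{i})$, i.e. a Dehn twist about the $i$-th crown end --- exactly the ambiguity that $\mathcal{T}(S,\dots)$ records, since it does not quotient by such twists, but that $\mathcal{GH}^{\mathcal{LP}}(\Sigma,\dots)$ cannot detect. Because in the polygonal end the segments realising $\vartheta_{j}$ and $\vartheta_{j}'$ are carried to the next vee simultaneously by $\rho(\gamma_{i})$, the shift has to be performed on both factors at once, so the redundancy is exactly the diagonal $\Z^{N}$. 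I would conclude that the map descends to a continuous bijection from $(\mathcal{T}(S, n_{1}-2, \dots, n_{N}-2)\times \mathcal{T}(S, n_{1}-2, \dots, n_{N}-2))/\Z^{N}$ onto $\mathcal{GH}^{\mathcal{LP}}(\Sigma, 2n_{1}-4, \dots, 2n_{N}-4)$.

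To finish I would verify that this bijection is a homeomorphism. Continuity in one direction is immediate from the explicit formulas for $f_{\Gamma}$; for the inverse, the representations and the ideal points $\{\vartheta_{j}\}, \{\vartheta_{j}'\}$, and therefore the crowned Teichm\"uller coordinates, depend continuously on $(\rho, \Gamma)$ in the product of the representation topology and the Hausdorff topology on curves, while the free and properly discontinuous $\Z^{N}$-action makes the quotient map open. The step I expect to be the main obstacle is the identification in the third paragraph: showing that two tuples of crowned data produce the same curve $\Gamma$ \emph{precisely} when they differ by a simultaneous integer relabelling of lifts, and not by any finer identification. This requires cleanly separating the cusp-labelling ambiguity from the Dehn-twist ambiguity, confirming the $(n_{i}-2)$-fold relation between them, and checking that the coupling imposed by $f_{\Gamma}$ genuinely forces the $\Z^{N}$-action to be diagonal rather than permitting independent shifts on the left and right factors.
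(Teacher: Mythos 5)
Your proposal is correct and follows essentially the same route as the paper: the paper's proof consists precisely of the preceding construction of $f_{\Gamma}$ from the data $(\rho_{l},\rho_{r},\{\vartheta_{i}\},\{\vartheta_{i}'\})$, the observation that this data is exactly what a pair of marked crowned hyperbolic surfaces provides, and the remark that a diagonal relabelling of the lifts of the cusps yields the same function $f_{\Gamma}$ and hence the same curve. Your additional care about why only the \emph{diagonal} $\Z^{N}$ acts trivially (an independent shift changes the pairing $\vartheta_{j}\mapsto\vartheta_{j}'$ and hence the curve) and about continuity of both directions is consistent with, and slightly more explicit than, what the paper records.
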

\begin{proof}This is a consequence of the above discussion together with the remark that a diagonal change of labelling produces the same function, hence the same curve on $\partial_{\infty}\AdS_{3}$.
\end{proof}

\begin{cor}The map $\Psi: \mathcal{MQ}(\Sigma, n_{1}, \dots, n_{N}) \rightarrow \mathcal{GH}^{\mathcal{LP}}(\Sigma, 2n_{1}-4, \dots, 2n_{N}-4)$ is a homeomorphism.
\end{cor}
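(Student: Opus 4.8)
The plan is to derive the corollary from the three properties of $\Psi$ already in hand—injectivity (Proposition \ref{prop:injective}), continuity, and properness—combined with a dimension count and the standard principle that a continuous, injective, proper map between manifolds of equal dimension is a homeomorphism onto a union of connected components. First I would note that, by Theorem \ref{teo:ends}, a differential with a pole of order exactly $n_{i}$ produces a light-like polygonal end with exactly $2(n_{i}-2)=2n_{i}-4$ fundamental vertices, so $\Psi$ genuinely maps $\mathcal{MQ}(\Sigma, n_{1}, \dots, n_{N})$ into $\mathcal{GH}^{\mathcal{LP}}(\Sigma, 2n_{1}-4, \dots, 2n_{N}-4)$. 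Since the number of fundamental vertices conversely determines the pole orders, this stratum is exactly $\Psi^{-1}(\mathcal{GH}^{\mathcal{LP}}(\Sigma, 2n_{1}-4, \dots, 2n_{N}-4))$; hence the restriction of the proper map $\Psi$ to it is again proper, and injectivity and continuity restrict trivially.

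The decisive step is the dimension count. Write $\tau$ for the genus of $\overline{\Sigma}$. For the source, the base $\mathcal{T}(\Sigma)$ has real dimension $6\tau-6+2N$, while the fibre of differentials with poles of order exactly $n_{i}$ is, by the Riemann--Roch count recalled in Section \ref{subsec:QD}, diffeomorphic to $\R^{d-N}\times (S^{1})^{N}$ with $d=3|\chi(\overline{\Sigma})|+2\sum_{i} n_{i}=6\tau-6+2\sum_{i} n_{i}$, hence of dimension $d$. Therefore
\[
	\dim \mathcal{MQ}(\Sigma, n_{1}, \dots, n_{N})=12\tau-12+2N+2\sum_{i} n_{i} \ .
\]
For the target, Theorem \ref{thm:parameterisation} identifies it with the square of $\mathcal{T}(S, n_{1}-2, \dots, n_{N}-2)$ modulo the discrete group $\Z^{N}$. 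Applying the formula $6\tau-6+\sum_{i=1}^{b}(m_{i}+3)$ with $b=N$ and $m_{i}=n_{i}-2$ gives each factor dimension $6\tau-6+N+\sum_{i} n_{i}$, so the product, and hence the quotient, has dimension $12\tau-12+2N+2\sum_{i} n_{i}$. The two dimensions coincide, which is precisely the matching asserted in the outline.

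To conclude, invariance of domain shows that a continuous injective map between equidimensional manifolds is open, so the image of $\Psi$ is open; properness makes it closed. Both copies of $\mathcal{T}(S, n_{1}-2, \dots, n_{N}-2)$ are connected (homeomorphic to Euclidean space by the Proposition quoted from \cite{Gupta}), so the product and its $\Z^{N}$-quotient are connected, and a nonempty clopen subset of a connected space is everything. Thus $\Psi$ is a continuous open bijection, and an open continuous bijection has continuous inverse, giving the desired homeomorphism.

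The point requiring the most care is less the soft topology than the bookkeeping that makes the dimension count valid: one must apply the Riemann--Roch dimension for exact pole orders and Gupta's crown-end count with compatible conventions ($\tau$, $N$ boundary cusps, $m_{i}=n_{i}-2$), and one must know that the $\Z^{N}$-action in Theorem \ref{thm:parameterisation} is properly discontinuous so that the target is a manifold rather than merely an orbifold. Should the action fail to be free, the invariance-of-domain argument can instead be run on the product $\mathcal{T}(S, n_{1}-2, \dots, n_{N}-2)^{2}$ upstairs, which is harmless since the quotient projection is a local homeomorphism there.
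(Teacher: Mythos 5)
Your proof is correct and follows essentially the same route as the paper, which simply cites properness, injectivity, continuity, and the equality of dimensions $2(6\tau-6+\sum_i(n_i+1))$ and then concludes. You merely make explicit the invariance-of-domain and clopen-image argument and the dimension bookkeeping that the paper leaves implicit.
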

\begin{proof}We already know that $\Psi$ is a proper, injective and continous map. By the previous theorem, $\mathcal{GH}^{\mathcal{LP}}(\Sigma, 2n_{1}-4, \dots, 2n_{N}-4)$ is a manifold of dimension $2(6\tau-6+\sum_{i}(n_{i}+1))$ which equals the dimension of the subbundle $\mathcal{MQ}(\Sigma, n_{1}, \dots, n_{N})$, hence $\Psi$ is a homeomorphism.
\end{proof}

\bibliographystyle{alpha}
\bibliographystyle{ieeetr}
\bibliography{bs-bibliography}

\bigskip

\noindent \footnotesize \textsc{DEPARTMENT OF MATHEMATICS, RICE UNIVERSITY}\\
\emph{E-mail address:}  \verb|andrea_tamburelli@libero.it|

\end{document}